\theoremstyle{plain}             %
\newtheorem{theorem}{Theorem}[section]
\newtheorem{lemma}[theorem]{Lemma}
\newtheorem{remark}[theorem]{Remark}
\newtheorem{assumption}[theorem]{Assumption}
\renewcommand{\d}{\operatorname{d\!}}
\newcommand{\E}{\mathbb{E}}
\newcommand{\isdef}{\mathrel{\mathrel{\mathop:}=}}
\newcommand{\dist}{\operatorname{dist}}
\newcommand{\bgamma}{{\boldsymbol{\gamma}}}
\newcommand{\balpha}{{\boldsymbol{\alpha}}}
\newcommand{\bbeta}{{\boldsymbol{\beta}}}
\newcommand{\btau}{{\boldsymbol{\tau}}}
\newcommand{\bkappa}{{\boldsymbol{\kappa}}}
\newcommand{\bphi}{{\boldsymbol{\varphi}}}
\DeclareMathOperator*{\essinf}{\operatorname{ess\,inf}}
\DeclareMathOperator*{\esssup}{\operatorname{ess\,sup}}
\newcommand{\refd}{{\operatorname*{ref}}}
\newcommand{\intI}{{\operatorname*{I}}}
\newcommand{\intQ}{{\operatorname*{Q}}}
\definecolor{navy}{RGB}{102,153,255}
\definecolor{tuerkis}{rgb}{0.0, 0.5, 1.0}
\definecolor{mint}{rgb}{0.71,0.84,0.82}
\definecolor{dunkelmint}{RGB}{0,117,108}
\definecolor{Myblue}{rgb}{0.2,0.2,0.7}
\definecolor{Mymagenta}{rgb}{1,0,1}
\definecolor{Mycyan}{rgb}{0,1,1}
\definecolor{Mygreen}{rgb}{0,0.6,0}
\definecolor{awesome}{rgb}{1.0, 0.13, 0.32}
\definecolor{azure}{rgb}{0.0, 0.5, 1.0}
\definecolor{deepfuchsia}{rgb}{0.76, 0.33, 0.76}
\title[Novel results for the anisotropic sparse grid quadrature]
{Novel results for the anisotropic sparse grid quadrature}
\author{A.-L.~Haji-Ali\and H.~Harbrecht\and M.~D.~Peters\and M.~Siebenmorgen}
\address{
Abdul-Lateef Haji-Ali,
Mathematical Institute, Radcliffe Observatory Quarter, Oxford OX1 6GG, United Kingdom}
\email{abdullateef.hajiali@maths.ox.ac.uk}
\address{
Helmut Harbrecht and
Michael D.\ Peters,
Departement Mathematik und Informatik,
Universit\"at Basel,
Basel, Switzerland
}
\email{\{helmut.harbrecht,michael.peters\}@unibas.ch}
\address{
Markus Siebenmorgen,
Institut f\"ur Numerische Simulation, Universit\"at Bonn, Bonn, Germany}
\email{siebenmorgen@ins.uni-bonn.de}
\subjclass[2000]{65D30, 65C30, 60H25}
\begin{document}
\begin{abstract}
This article is dedicated to the anisotropic sparse grid quadrature for
functions which are analytically extendable into an anisotropic tensor product
domain. Taking into account this anisotropy, we end up with a dimension independent 
error versus cost estimate of the proposed quadrature. 
In addition, we provide a novel and improved estimate for the cardinality of the underlying
anisotropic index set. To validate the theoretical findings, we present
several examples ranging from simple quadrature problems to diffusion problems
on random domains. These examples demonstrate the remarkable convergence 
behaviour of the anisotropic sparse grid quadrature in applications.
\end{abstract}
\maketitle
\section{Introduction}
This article is dedicated to the construction of anisotropic 
sparse grid quadrature methods for functions which are 
analytically extendable into an anisotropic tensor product domain.
Specifically, we will develop and analyze a particular realization 
based on Gauss-Legendre quadrature rules. Anisotropic sparse 
grid quadrature methods can be seen as a generalization of 
sparse Smolyak type quadratures, cf.\ \cite{Smo63}, since they 
are explicitly tailored to the anisotropic behaviour of the underlying 
integrand. Taking into account these anisotropies leads to a remarkable 
improvement in the cost of the sparse grid quadrature. 

Usually, a sparse grid quadrature is described by some sparse index set 
and a sequence of univariate quadrature rules. For the sequence of univariate 
quadrature rules, we employ Gauss-Legendre quadratures with linearly 
increasing numbers of quadrature points. The index set is a priorily 
chosen with respect to a certain weight vector, which incorporates 
the anisotropy, and a predefined approximation level. It is also 
possible to adaptively select those indices which provide the 
main contribution to the integral, see \cite{GG03}. Such adaptive 
methods have successfully been applied in the context of random 
diffusion problems, see e.g.~\cite{CCS14b,NTTT16,SS13}, in order to 
compute best \(N\)-term approximations of the corresponding solution. 
However, the adaptive construction of index sets is computational 
expensive and only heuristic error indicators are available. 
Hence, it can not be guaranteed that the adaptively selected 
index set is optimal. 
Instead of choosing Gauss-Legendre points, a sequence of nested 
quadrature rules such as Clenshaw-Curtis or Leja type quadratures 
could be considered as well. 
While the number of quadrature points needs to be doubled for 
Clenshaw-Curtis quadratures in order to guarantee their nestedness, 
only one additional quadrature point is added for each consecutive member of the 
Leja sequence. 
Hence, based on the Leja sequences, a sparse grid quadrature can be 
constructed where only one additional function evaluation is required 
for each new multi-index in the sparse index set, cf.~\cite{GO16}. 
However, in contrast to Gaussian quadratures, the quadrature weights 
of the Leja sequence are not necessarily positive which yields that the 
stability constant of the quadrature might not be uniformly bounded. 
Moreover, Gaussian quadrature rules provide a much higher degree 
of polynomial exactness than Leja quadratures with the same number 
of quadrature points, which is particularly advantageous for smooth integrands.  

The main task in estimating
the quadrature's cost is the estimation of the number of multi-indices
which are contained in the sparse index set. For the isotropic variant, 
the number of indices can easily be determined by combinatorial arguments,
see e.g.~\cite{GG98,NR96,WW95}. Things get more involved if one considers
anisotropic, i.e.\ weighted, sparse index sets, which yields a particular 
instance of a weighted tensor product algorithm, see \cite{WW99} and 
especially \cite[Chapter 15]{NW10}, where a comprehensive overview 
of related literature can be found. In this case, to the best of our
knowledge, only very rough estimates on the cardinality of the index set
are known, although several estimates can be found in the literature, see
e.g.~\cite{Beg72}. In fact, this problem is equivalent to the estimation 
of the number of integer solutions to linear Diophantine inequalities,
see \cite{SCH} and the references therein, which is a problem in number 
theory, or to the calculation of the integer points in a convex polyhedron.
Current estimates are not sharp and do not provide improved results 
for the cost of the anisotropic sparse grid quadrature in comparison with 
the anisotropic full tensor product quadrature. In this article, we prove 
a novel formula to estimate the cardinality of the sparse index set in 
the weighted case. This formula is much more accurate than the other 
established estimates.

A very popular application that requires efficient high-dimensional
quadrature rules are parametric partial differential equations. They
are obtained, for example, from partial differential equations with random 
data by truncating the series expansions of the underlying random 
fields and parametrizing them with respect to the 
random fields' distribution. As representatives for such problems, 
we shall consider here elliptic diffusion problems with random coefficients 
as well as diffusion problems on random domains
as specific examples to quantify the performance of the anisotropic
sparse grid quadrature. The resulting quadrature approach is very similar 
to the an\-iso\-tro\-pic sparse grid collocation method based on Gaussian
collocation points which has been introduced in \cite{NTW08a,NTW08b}.
The  collocation method interpolates the random solution in certain collocation
points and represents it in the parameter space with the aid of
polynomials. Thus, it belongs to the class of non-intrusive methods,
cf.~\cite{BNT07}. Instead of representing the random solution itself, the
anisotropic sparse grid quadrature can be employed to directly compute
the solutions statistics, i.e.\ its moments, and functionals of the solution.

The remainder of this article is organized as follows.
Section~\ref{sec:SPDE} specifies the quadrature problem
under consideration and provides the corresponding framework.
The subsequent Section~\ref{sec:anisogauss} is dedicated
to the construction of the anisotropic sparse grid quadrature 
method. In particular, the main ingredients, i.e.\ the index set 
and the sequence of univariate quadrature rules are introduced.
In Section \ref{sec:errana}, we provide corresponding error estimates
with respect to the level of the anisotropic sparse grid quadrature and 
with respect to the cardinality of the index set based on the one 
dimensional error estimate for the Gauss-Legendre quadrature.
Section~\ref{sec:cost} deals with the cost of the anisotropic 
sparse grid quadrature. In particular, we state here a
novel estimate on the number of indices in the weighted sparse
tensor product and provide a proof of this estimate. In
Section~\ref{sec:NumRes}, we consider three different applications:
A pure high dimensional quadrature problem, a diffusion problem 
with random coefficient and a diffusion problem on a random domain.
Finally, we state concluding remarks in Section~\ref{sec:conclusion}.

\section{Problem setting}\label{sec:SPDE}
In what follows, let \(\Gamma\isdef[-1,1]\).
The \(\sigma\)-algebra of Borel sets on \(\Gamma\) shall be denoted by \(\mathcal{B}\) 
and \(\nu\isdef\d y/2\) is the normalized Lebesgue measure on \(\mathcal{B}\). 
We define the product probability space \((\Gamma^\infty,\mathcal{B}^\infty,\mu)\) 
of all sequences\footnote{We make the convention \(\mathbb{N}\isdef\{0,1,2,\ldots\}\) 
and \(\mathbb{N}^*\isdef\{1,2,\ldots\}.\)} \(\boldsymbol\psi\colon\mathbb{N}^*\to\Gamma\), 
where \(\boldsymbol\psi=\{\psi_n\}_n\). 
Herein,  \(\mathcal{B}^\infty\) is the \(\sigma\)-algebra generated by the cylindrical 
sets and \(\mu\) is the corresponding product measure, i.e.\ \(\mu(A_1\times\dots\times A_m\times\Gamma\times\dots)=\prod_{n=1}^m\nu(A_i)\) for
all \(m\in\mathbb{N}\) and \(A_1,\ldots,A_m\in\mathcal{B}\).
For an integrable function \(f\colon \Gamma^\infty\to\mathbb{R}\),
we are interested in the efficient approximation of the integral
\begin{equation}\label{eq:Int}
\int_{\Gamma^\infty} f\d\mu.
\end{equation}
The approach we shall present here is based on the construction of efficient quadrature formulas for
a certain surrogate \(f_m\colon\Gamma^m\to\mathbb{R}\) of
\(f\). In order to define this surrogate, we make the following assumption.
\begin{assumption}\label{ass:AnaExtension}
Let \(\Sigma_n=\Sigma(\Gamma,\tau_n)\isdef\{z\in\mathbb{C}: \dist(z,\Gamma)\leq\tau_n\}\). We assume that
\(f\) is analytically extendable into \(\boldsymbol\Sigma(\boldsymbol\tau)\isdef
\bigtimes_{n=1}^{\infty}\Sigma_n\) for an isotone sequence
\(\tau_n\to\infty\), which measures the anisotropy
of the function \(f\) with respect to the different dimensions. 
\end{assumption}

Now, given an anchor point \(\overline{\boldsymbol\psi}\in\Gamma^{\infty}\), we define the surrogate \(f_m\) of \(f\) as
the projection of \(f\) onto the first \(m\) variables anchored at 
\(\overline{\boldsymbol\psi}\), i.e.\
\[
f_m(y_1,\ldots,y_m) \isdef f(y_1,\ldots,y_m,\overline{\psi}_{m+1},\overline{\psi}_{m+2},\ldots). 
\]
The projection \(f_m\) is well-defined since \(f\) is analytic. Moreover, we 
know from Assumption\ \ref{ass:AnaExtension} that \(f_m\) is analytically 
extendable into 
\[
\boldsymbol\Sigma_m\isdef\bigtimes_{n=1}^{m}\Sigma_n
\]
and it holds, due to the Kolomogorov extension theorem, 
cf.\ \cite{Kol33}, that 
\begin{equation}\label{eq:IntApprox}
\bigg|\int_{\Gamma^\infty}f\d\mu-\int_{\Gamma^m}f_m({\bf y})2^{-m}\d{\bf y}\bigg|\leq\varepsilon(m)
\end{equation} 
with a null sequence \(\varepsilon(m)\).%

In the sequel, we shall approximate
\begin{equation}\label{eq:IntFinite}
{\bf I} f_m\isdef \bigg(\bigotimes_{n=1}^m\operatorname{I}^{(n)}\bigg) 
f_m\isdef \int_{\Gamma^m}f_m({\bf y})2^{-m}\d{\bf y}
\end{equation}
by the anisotropic sparse tensor product quadrature. Herein, we set
\[
\big(\intI^{(n)} f_m\big)({\bf y}_n^\star) \isdef \int_{\Gamma}  f_m(y_n,{\bf y}_n^\star)\frac{\d y_n}{2},
\] 
where \({\bf y}_n^\star\isdef[y_1,\ldots,y_{n-1},y_{n+1},\ldots,y_m]\) with the notational 
convenience \({\bf y} = (y_n, {\bf y}_n^\star)\). 
It is evident that the precision of the applied quadrature
has to increase when \(m\) increases. Moreover, the cost
usually scales exponentially with respect to \(m\), which is referred to 
as the ``curse of dimensionality''. Therefore, we have to keep 
track of the impact of the dimension \(m\) on the error estimates. 

We start by considering the univariate quadrature with respect to \(y_n\), \(n=1,\ldots,m\). To that end,
we introduce the \(N\)-point Gauss-Legendre quadrature operator \(\intQ^{(n)}\) 
with quadrature points \(\xi_k\in\Gamma\) and weights \(\omega_k\) according to
\[
\big(\intQ^{(n)} f_m\big)({\bf y}_n^\star) \isdef \sum_{k=1}^N \omega_k 
f_m(\xi_k,{\bf y}_n^\star).
\]
This quadrature operator is exact for all polynomials \(p\in\Pi_{2 N-1}\isdef\operatorname{span}\{1,x,\ldots,x^{2N-1}\}\). Moreover, \(\intQ^{(n)}\colon C(\Gamma)\to \mathbb{R}\) is 
continuous with continuity constant \(1\). Hence, the quadrature error can be bounded 
by 
\begin{equation}\label{eq:polappr}
\begin{aligned}
\big|(\intI^{(n)}-\intQ^{(n)}) f_m({\bf y}_n^\star)\big|&\le \inf_{p\in\Pi_{2N-1}}\big(\big|\intI^{(n)}(f_m({\bf y}_n^\star)-p)\big|
+\big|\intQ^{(n)}(f_m({\bf y}_n^\star)-p)\big|\big)\\
&\le 2 \inf_{p\in\Pi_{2N-1}}\big\|f_m({\bf y}_n^\star)-p\big\|_{C(\Gamma)}.
\end{aligned}
\end{equation}
Thus, in accordance with 
e.g.\ \cite[Chapter 7.8]{DL93} and \cite{BNT07}, the analytic extendability of \(f_m\) guarantees that the 
\(N\)-point Gauss-Legendre quadrature satisfies the one-dimensional error estimate 
\begin{equation}\label{eq:assgaussquad}
\big|(\intI^{(n)}-\intQ^{(n)}) f_m({\bf y}_n^\star)\big|\leq c(\kappa_n) \exp\big(-\log(\kappa_n) (2N-1)\big)\|f_m({\bf y}_n^\star)\|_{C(\Sigma_n)}
\end{equation}
with 
\[
\kappa_n = \tau_n+\sqrt{1+\tau_n^2}\quad\text{and}\quad c(\kappa_n) = \frac{4}{\kappa_n-1}.
\]
Here and in the sequel, we set
\[
\|f_m({\bf y}_n^\star)\|_{C(\Sigma_n)}\isdef\max_{z\in\Sigma_n}|f_m(z,{\bf y}_n^\star)|.
\]
Note that it holds by definition 
\[
\kappa_n>1+\tau_n,\quad \kappa_n>2\tau_n\quad\text{and, thus,}\quad c(\kappa_n)<\frac{4}{\tau_n}.
\] 

\section{Anisotropic sparse grid quadrature}\label{sec:anisogauss}
We shall now introduce anisotropic sparse grid quadrature
formulas which extend the original idea of Smolyak's
construction from \cite{Smo63}. The subsequent realization 
is a particular instance of a weighted tensor product algorithm 
as introduced in \cite{WW99}. Hence, all the results for those 
algorithms are valid, see \cite[Chapter 15]{NW10} for an 
overview. Compared with the general setup considered therein, 
our analysis is taylored for Gauss-Legendre quadrature rules, 
which are especially non-nested, and the specific function class 
under consideration.

We start by considering a sequence of univariate quadratures 
\(\intQ_j\) of increasing accuracy with $N_j\in\mathbb{N}^*$
points and weights
\begin{equation}\label{eq:interpp}
\theta_j\isdef\big\{\xi_{i,j}\big\}_{i=1}^{N_j},\quad \big\{\omega_{i,j}\big\}_{i=1}^{N_j},\quad j=0,1,2,\ldots,
\end{equation}
where \(N_0\le N_1 \le \cdots\) and \(N_j\to \infty\) for \(j\to \infty\). 

Since we deal with multidimensional quadrature rules, we furthermore 
define tensor products of univariate quadrature rules by 
\[
\intQ_{j_1}^{(1)}\otimes\cdots\otimes \intQ_{j_m}^{(m)} f_m \isdef 
\sum_{k_1=1}^{N_{j_1}}\ldots\sum_{k_m=1}^{N_{j_m}} 
\bigg(\prod_{i=1}^m \omega_{k_i,j_i}\bigg)
f_m(\xi_{k_1,j_1},\ldots, \xi_{k_m,j_m}).
\]

Following the notation of \cite{NR96}, we introduce for
\(j\in\mathbb{N}\) the difference quadrature operator
\begin{equation}\label{eq:deltaop}
\Delta_j\isdef \intQ_{j}-\intQ_{j-1},\quad\text{where}\quad
\intQ_{-1}\isdef 0.
\end{equation}
With the telescoping sum \(\intQ_{j}=\sum_{\ell=0}^j \Delta_\ell\),
the \emph{isotropic $m$-fold tensor product quadrature}, 
which uses \(N_j\) quadrature points in each direction, 
can be written by
\begin{equation}\label{eq:isoquadop}
\intQ_{j}^{(1)}\otimes\cdots\otimes \intQ_{j}^{(m)}
= \sum_{\|\balpha\|_{\infty}\le j }
\Delta_{\alpha_1}^{(1)}\otimes\cdots\otimes\Delta_{
\alpha_m}^{(m)},
\end{equation}
where the superscript index indicates the particular dimension. 
Since the tensor product quadrature is convergent, we observe that
\begin{equation}\label{eq:exprint}
{\bf I} (f_m) = \sum_{\balpha \in \mathbb{N}^m} \Delta_{\alpha_1}^{(1)}\otimes\cdots\otimes
\Delta_{\alpha_m}^{(m)} f_m. 
\end{equation}
The cost of applying a quadrature formula is measured by
the number of quadrature points. In case of the isotropic 
tensor product quadrature operator \eqref{eq:isoquadop} this 
number is given by \(N^m\), where \(N=N_{j_1}=\ldots=N_{j_m}\). 
Thus, this isotropic tensor product quadrature suffers extremely
from the curse of dimensionality. The classical \emph{sparse
grid quadrature}, cf.~\cite{BG04,GG98,WW95}, can overcome
this problem up to a certain extent. It is based on linear
combinations of tensor product quadrature formulas of
relatively small size. To define the sparse
quadrature, we introduce as in \cite{BNR00,NTW08a,WW95} for
each \emph{approximation level} \(q\) the sets of multi-indices
\[
X(q,m)\isdef\Big\{{\bf 0}\le\balpha\in\mathbb{N}^m:
\|\balpha\|_1\le q\Big\}
\]
and
\[
Y(q,m)\isdef\Big\{{\bf 0}\le\balpha\in\mathbb{N}^m:
q-m<\|\balpha\|_1 \le q\Big\}.
\]
The sparse grid quadrature operator, cf.~\cite{BNR00,GG98,Smo63}, 
is then given by
\begin{equation}\label{eq:smolop}
\mathcal{A}(q,m)\isdef\sum_{\balpha\in X(q,m)}
\Delta_{\alpha_1}^{(1)}\otimes\cdots\otimes\Delta_{\alpha_m}^{(m)}.
\end{equation}
An equivalent expression is obtained by the 
\emph{combination technique}, cf.\ \cite{GSZ92},
\begin{equation}\label{eq:smolop2}
\mathcal{A}(q,m)=\sum_{\balpha\in Y(q,m)}(-1)^{q-\|\balpha\|_1}
\binom{m-1}{q-\|\balpha\|_1}
{\bf Q}_{\balpha},
\quad\text{where}\quad
{\bf Q}_{\balpha}\isdef \intQ_{\alpha_1}^{(1)}\otimes\cdots\otimes
\intQ_{\alpha_m}^{(m)}.
\end{equation}
A visualization of the set of indices \(X(q,m)\) is given in Figure \ref{fig:indicessparse}.

\begin{figure}
\begin{center}
\begin{tikzpicture}[scale=.52,every node/.style={minimum size=1cm},on grid]
 \fill[white,fill opacity=0.9] (0,0) rectangle (6,6);
        \draw[step=1cm, black] (0,0) grid (6,6); %
        \draw[black,very thick] (0,0) rectangle (6,6);%
        \fill[tuerkis] (0.1,0.1) rectangle (0.9,0.9);
       	\fill[tuerkis] (1.1,0.1) rectangle (1.9,0.9);
	\fill[tuerkis] (2.1,0.1) rectangle (2.9,0.9);
	\fill[tuerkis] (3.1,0.1) rectangle (3.9,0.9);
	\fill[tuerkis] (4.1,0.1) rectangle (4.9,0.9);
	\fill[tuerkis] (5.1,0.1) rectangle (5.9,0.9);
        \fill[tuerkis] (0.1,1.1) rectangle (0.9,1.9);
        \fill[tuerkis] (1.1,1.1) rectangle (1.9,1.9);
        \fill[tuerkis] (2.1,1.1) rectangle (2.9,1.9);
        \fill[tuerkis] (3.1,1.1) rectangle (3.9,1.9);
	\fill[tuerkis] (4.1,1.1) rectangle (4.9,1.9);
	\fill[tuerkis] (0.1,2.1) rectangle (0.9,2.9);
        \fill[tuerkis] (1.1,2.1) rectangle (1.9,2.9);
        \fill[tuerkis] (2.1,2.1) rectangle (2.9,2.9);
        \fill[tuerkis] (3.1,2.1) rectangle (3.9,2.9);
	\fill[tuerkis] (0.1,3.1) rectangle (0.9,3.9);
        \fill[tuerkis] (1.1,3.1) rectangle (1.9,3.9);
        \fill[tuerkis] (2.1,3.1) rectangle (2.9,3.9);
        \fill[tuerkis] (0.1,4.1) rectangle (0.9,4.9);
        \fill[tuerkis] (1.1,4.1) rectangle (1.9,4.9);
        \fill[tuerkis] (0.1,5.1) rectangle (0.9,5.9);

 \draw[->,thick] (0,0) -- (6.5,0) node[right] {$\alpha_1$};
    	\draw[->,thick] (0,0) -- (0,6.5) node[above] {$\alpha_2$};
    	\foreach \x/\xtext in {0/0, 1/1, 2/2, 3/3, 4/4, 5/5}
    	\draw[shift={(\x+0.5,0)}] (0pt,2pt) -- (0pt,-2pt) node[below] {$\xtext$};
    	\foreach \y/\ytext in {0/0, 1/1, 2/2, 3/3, 4/4, 5/5}
    	\draw[shift={(0,\y+0.5)}] (2pt,0pt) -- (-2pt,0pt) node[left] {$\ytext$};

\end{tikzpicture}
\hspace*{-1cm}
\begin{tikzpicture}[scale=.4,every node/.style={minimum size=1cm},on grid]
     \begin{scope}[
            yshift=-160,every node/.append style={
            yslant=0.5,xslant=-1},yslant=0.5,xslant=-1
            ]
                  \fill[white,fill opacity=.9] (0,0) rectangle (6,6);
        \draw[black,very thick] (0,0) rectangle (6,6);
        \draw[step=1cm, black] (0,0) grid (6,6);
       \fill[tuerkis] (0.1,0.1) rectangle (0.9,0.9);

        \draw[->,thick] (0,0) -- (6.5,0) node[right] {$\mathsf{\alpha_1}$};
    	\draw[->,thick] (0,0) -- (0,6.5) node[above] {$\mathsf{\alpha_2}$};
	\foreach \x/\xtext in {0/0, 1/1, 2/2, 3/3, 4/4, 5/5}
    	\draw[shift={(\x+0.5,0)}] (0pt,2pt) -- (0pt,-2pt) node[below] {$\xtext$};
    	\foreach \y/\ytext in {0/0, 1/1, 2/2, 3/3, 4/4, 5/5}
    	\draw[shift={(0,\y+0.5)}] (2pt,0pt) -- (-2pt,0pt) node[left] {$\ytext$};

    \end{scope}

    \begin{scope}[
            yshift=-115,every node/.append style={
            yslant=0.5,xslant=-1},yslant=0.5,xslant=-1
            ]
              \fill[white,fill opacity=.9] (0,0) rectangle (6,6);
        \draw[black,very thick] (0,0) rectangle (6,6);
        \draw[step=1cm, black] (0,0) grid (6,6);

       \fill[tuerkis] (0.1,0.1) rectangle (0.9,0.9);
       	\fill[tuerkis] (1.1,0.1) rectangle (1.9,0.9);
        \fill[tuerkis] (0.1,1.1) rectangle (0.9,1.9);

    \end{scope}

     \begin{scope}[
            yshift=-70,every node/.append style={
            yslant=0.5,xslant=-1},yslant=0.5,xslant=-1
            ]
                  \fill[white,fill opacity=.9] (0,0) rectangle (6,6);
        \draw[black,very thick] (0,0) rectangle (6,6);
        \draw[step=1cm, black] (0,0) grid (6,6);
       \fill[tuerkis] (0.1,0.1) rectangle (0.9,0.9);
       	\fill[tuerkis] (1.1,0.1) rectangle (1.9,0.9);
	\fill[tuerkis] (2.1,0.1) rectangle (2.9,0.9);
        \fill[tuerkis] (0.1,1.1) rectangle (0.9,1.9);
        \fill[tuerkis] (1.1,1.1) rectangle (1.9,1.9);
        \fill[tuerkis] (0.1,2.1) rectangle (0.9,2.9);

    \end{scope}

 \begin{scope}[
            yshift=-25,every node/.append style={
            yslant=0.5,xslant=-1},yslant=0.5,xslant=-1
            ]
                  \fill[white,fill opacity=.9] (0,0) rectangle (6,6);
        \draw[black,very thick] (0,0) rectangle (6,6);
        \draw[step=1cm, black] (0,0) grid (6,6);
       \fill[tuerkis] (0.1,0.1) rectangle (0.9,0.9);
       	\fill[tuerkis] (1.1,0.1) rectangle (1.9,0.9);
	\fill[tuerkis] (2.1,0.1) rectangle (2.9,0.9);
	\fill[tuerkis] (3.1,0.1) rectangle (3.9,0.9);
        \fill[tuerkis] (0.1,1.1) rectangle (0.9,1.9);
        \fill[tuerkis] (1.1,1.1) rectangle (1.9,1.9);
        \fill[tuerkis] (2.1,1.1) rectangle (2.9,1.9);
        \fill[tuerkis] (0.1,2.1) rectangle (0.9,2.9);
         \fill[tuerkis] (1.1,2.1) rectangle (1.9,2.9);
         \fill[tuerkis] (0.1,3.1) rectangle (0.9,3.9);

    \end{scope}

    \begin{scope}[
    	yshift=20,every node/.append style={
    	    yslant=0.5,xslant=-1},yslant=0.5,xslant=-1
    	             ]
	    \fill[white,fill opacity=0.9] (0,0) rectangle (6,6);
        \draw[step=1cm, black] (0,0) grid (6,6); %
        \draw[black,very thick] (0,0) rectangle (6,6);%
        \fill[tuerkis] (0.1,0.1) rectangle (0.9,0.9);
       	\fill[tuerkis] (1.1,0.1) rectangle (1.9,0.9);
	\fill[tuerkis] (2.1,0.1) rectangle (2.9,0.9);
	\fill[tuerkis] (3.1,0.1) rectangle (3.9,0.9);
	\fill[tuerkis] (4.1,0.1) rectangle (4.9,0.9);
        \fill[tuerkis] (0.1,1.1) rectangle (0.9,1.9);
        \fill[tuerkis] (1.1,1.1) rectangle (1.9,1.9);
        \fill[tuerkis] (2.1,1.1) rectangle (2.9,1.9);
        \fill[tuerkis] (3.1,1.1) rectangle (3.9,1.9);
        \fill[tuerkis] (0.1,2.1) rectangle (0.9,2.9);
        \fill[tuerkis] (1.1,2.1) rectangle (1.9,2.9);
        \fill[tuerkis] (2.1,2.1) rectangle (2.9,2.9);
        \fill[tuerkis] (0.1,3.1) rectangle (0.9,3.9);
        \fill[tuerkis] (1.1,3.1) rectangle (1.9,3.9);
        \fill[tuerkis] (0.1,4.1) rectangle (0.9,4.9);

    \end{scope}

	 \begin{scope}[
    	yshift=65,every node/.append style={
    	    yslant=0.5,xslant=-1},yslant=0.5,xslant=-1
    	             ]
	    \fill[white,fill opacity=0.9] (0,0) rectangle (6,6);
        \draw[step=1cm, black] (0,0) grid (6,6); %
        \draw[black,very thick] (0,0) rectangle (6,6);%
        \fill[tuerkis] (0.1,0.1) rectangle (0.9,0.9);
       	\fill[tuerkis] (1.1,0.1) rectangle (1.9,0.9);
	\fill[tuerkis] (2.1,0.1) rectangle (2.9,0.9);
	\fill[tuerkis] (3.1,0.1) rectangle (3.9,0.9);
	\fill[tuerkis] (4.1,0.1) rectangle (4.9,0.9);
	\fill[tuerkis] (5.1,0.1) rectangle (5.9,0.9);
        \fill[tuerkis] (0.1,1.1) rectangle (0.9,1.9);
        \fill[tuerkis] (1.1,1.1) rectangle (1.9,1.9);
        \fill[tuerkis] (2.1,1.1) rectangle (2.9,1.9);
        \fill[tuerkis] (3.1,1.1) rectangle (3.9,1.9);
	\fill[tuerkis] (4.1,1.1) rectangle (4.9,1.9);
	\fill[tuerkis] (0.1,2.1) rectangle (0.9,2.9);
        \fill[tuerkis] (1.1,2.1) rectangle (1.9,2.9);
        \fill[tuerkis] (2.1,2.1) rectangle (2.9,2.9);
        \fill[tuerkis] (3.1,2.1) rectangle (3.9,2.9);
	\fill[tuerkis] (0.1,3.1) rectangle (0.9,3.9);
        \fill[tuerkis] (1.1,3.1) rectangle (1.9,3.9);
        \fill[tuerkis] (2.1,3.1) rectangle (2.9,3.9);
        \fill[tuerkis] (0.1,4.1) rectangle (0.9,4.9);
        \fill[tuerkis] (1.1,4.1) rectangle (1.9,4.9);
        \fill[tuerkis] (0.1,5.1) rectangle (0.9,5.9);
    \end{scope}

    	\draw[-latex,thick] (8,7.4) node[right]{$\mathsf{\alpha_3=0}$}
         	to[out=180,in=90] (4.5,4.6);

    	\draw[-latex,thick](8,5.2)node[right]{$\mathsf{\alpha_3=1}$}
        		to[out=180,in=90] (4.5,3);

        \draw[-latex,thick] (8,3) node[right]{$\mathsf{\alpha_3=2}$}
         	to[out=180,in=90] (4.5,1.4);

    	\draw[-latex,thick](8,0.8)node[right]{$\mathsf{\alpha_3=3}$}
        		to[out=180,in=90] (4.5,-0.2);
	\draw[-latex,thick] (8,-1.4) node[right]{$\mathsf{\alpha_3=4}$}
         	to[out=180,in=90] (4.5,-1.8);

    	\draw[-latex,thick](8,-3.6)node[right]{$\mathsf{\alpha_3=5}$}
        		to[out=180,in=90] (4.5,-3.4);
\end{tikzpicture}
\caption{\label{fig:indicessparse} The \(21\) indices contained in the sparse grid \(X(5,2)\) on the left
and the \(56\) indices contained in \(X(5,3)\) on the right.}
\end{center}
\end{figure}
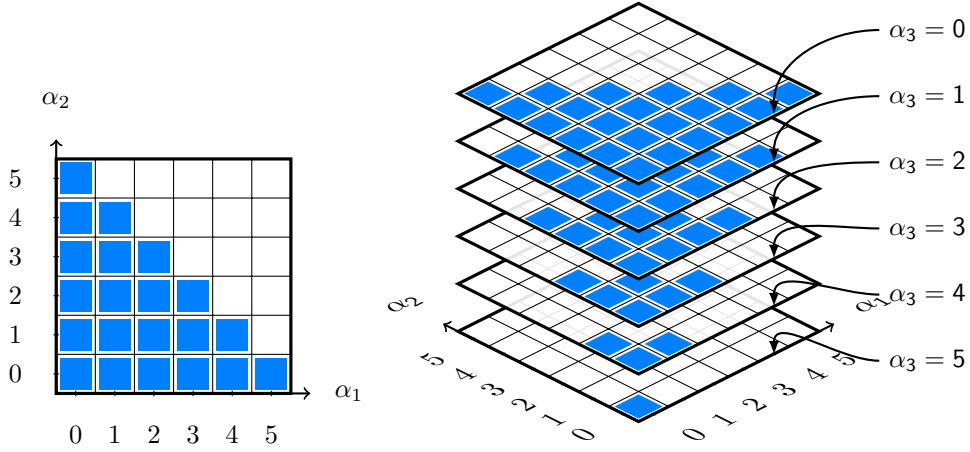

The number of quadrature points used in \eqref{eq:smolop}
or \eqref{eq:smolop2} is considerably reduced compared to 
the full tensor product quadrature. However, the sparse 
quadrature operator does not take into account the fact that the 
different parameter dimensions are of different importance to the 
integrand \(f_m\). Indeed, the cardinality of the set \(X(q,m)\) is 
given by
\[
\# X(q,m)=\binom{q+m}{m},
\]
which still depends heavily on \(m\). 
Thus, we assign a weight to each particular dimension and
use a weighted version of the Smolyak quadrature operator.

Let \({\bf w}\in \mathbb{R}_+^m\) denote a weight vector for the
different parameter dimensions. We assume
in the following that the weight vector is sorted in ascending order,
i.e.~\(w_1\le w_2\le \ldots\le w_m\). Otherwise, we would rearrange
the particular dimensions accordingly.  We modify the sparse grid index
sets \(X(q,m)\) and \(Y(q,m)\) in the following way, see also \cite{NTW08b},
\begin{equation}\label{eq:anisoset1}
X_{\bf w}(q,m)\isdef\Bigg\{{\bf 0}\le\balpha\in\mathbb{N}^m:
\sum_{n=1}^m \alpha_n w_n\le q\Bigg\}
\end{equation}
and
\begin{equation}\label{eq:anisoset2}
Y_{\bf w}(q,m)\isdef\Bigg\{{\bf 0}\le\balpha\in\mathbb{N}^m:
q-\|{\bf w}\|_1<\sum_{n=1}^m \alpha_n w_n\le q
\Bigg\}.
\end{equation}
With this notation at hand, the \emph{anisotropic sparse grid 
quadrature operator} of level \(q\in\mathbb{N}\) is defined by
\begin{equation}\label{eq:anismolop}
\mathcal{A}_{\bf w}(q,m)\isdef\sum_{\balpha\in X_{\bf w}(q,m)}
\Delta_{\alpha_1}^{(1)}\otimes\cdots\otimes\Delta_{\alpha_m}^{(m)}
\end{equation}
which can equivalently be expressed as, cf.~\cite{NTW08b},
\begin{equation}\label{eq:anismolop2}
\mathcal{A}_{\bf w}(q,m)=\sum_{\balpha\in Y_{\bf w}(q,m)}
c_{\bf w}(\balpha)
{\bf Q}_{\balpha},
\quad\text{with }\quad
c_{\bf w}(\balpha)\isdef \sum_{\genfrac{}{}{0pt}{}{\bbeta \in\{0,1\}^m}
{\balpha+\bbeta\in Y_{\bf w}(q,m)}}(-1)^{\|\bbeta\|_1}.
\end{equation}

The formula \eqref{eq:anismolop2} can be regarded as the \emph{anisotropic
combination technique quadrature}. For the evaluation of this formula,
we only need to determine the coefficients \(c_{\bf w}(\balpha)\) and to apply
tensor product quadrature operators of relatively small size.
Thus, in order to compute the approximation to \eqref{eq:IntFinite} with the
anisotropic sparse grid quadrature \eqref{eq:anismolop2}, it is sufficient to 
evaluate the integrand \(f_m\) on the \emph{anisotropic sparse grid}
\[
\mathcal{J}_{\bf w}(q,m)\isdef \bigcup_{\balpha\in Y_{\bf w}(q,m)}
\theta_{\alpha_1}\times\cdots\times\theta_{\alpha_m}.
\]
Note that the sparse grid quadrature operator \eqref{eq:smolop}
coincides with the anisotropic sparse grid quadrature operator
\eqref{eq:anismolop} for the special weight vector \({\bf w}={\bf 1}\isdef[1,1,\ldots,1]\).

In Figure \ref{fig:indicesweighted}, the indices of the weighted sparse
grid \(X_{(1,2.5)}(5,2)\) and of the weighted sparse grid \(X_{(1,2,3)}(5,3)\)
are visualized. We observe that the number of indices is drastically reduced
in comparison to the according isotropic sparse grids visualized in
Figure \ref{fig:indicessparse}.

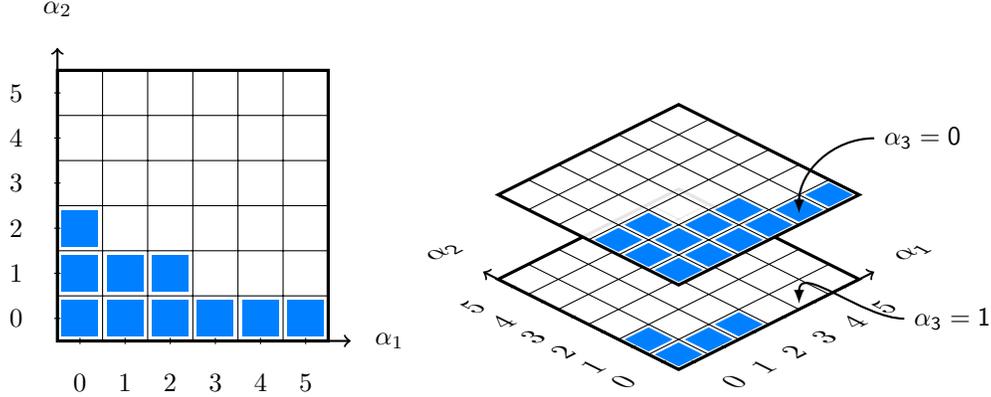
\begin{figure}
\begin{center}
\begin{tikzpicture}[scale=.6,every node/.style={minimum size=1cm},on grid]
 \fill[white,fill opacity=0.9] (0,0) rectangle (6,6);
        \draw[step=1cm, black] (0,0) grid (6,6); %
        \draw[black,very thick] (0,0) rectangle (6,6);%
        \fill[tuerkis] (0.1,0.1) rectangle (0.9,0.9);
       	\fill[tuerkis] (1.1,0.1) rectangle (1.9,0.9);
	\fill[tuerkis] (2.1,0.1) rectangle (2.9,0.9);
	\fill[tuerkis] (3.1,0.1) rectangle (3.9,0.9);
	\fill[tuerkis] (4.1,0.1) rectangle (4.9,0.9);
	\fill[tuerkis] (5.1,0.1) rectangle (5.9,0.9);
        \fill[tuerkis] (0.1,1.1) rectangle (0.9,1.9);
        \fill[tuerkis] (1.1,1.1) rectangle (1.9,1.9);
        \fill[tuerkis] (2.1,1.1) rectangle (2.9,1.9);
        \fill[tuerkis] (0.1,2.1) rectangle (0.9,2.9);

 \draw[->,thick] (0,0) -- (6.5,0) node[right] {$\alpha_1$};
    	\draw[->,thick] (0,0) -- (0,6.5) node[above] {$\alpha_2$};
    	\foreach \x/\xtext in {0/0, 1/1, 2/2, 3/3, 4/4, 5/5}
    	\draw[shift={(\x+0.5,0)}] (0pt,2pt) -- (0pt,-2pt) node[below] {$\xtext$};
    	\foreach \y/\ytext in {0/0, 1/1, 2/2, 3/3, 4/4,5/5}
    	\draw[shift={(0,\y+0.5)}] (2pt,0pt) -- (-2pt,0pt) node[left] {$\ytext$};

\end{tikzpicture}
\hspace*{-1cm}
\begin{tikzpicture}[scale=.4,every node/.style={minimum size=1cm},on grid]
    \begin{scope}[
            yshift=-105,every node/.append style={
            yslant=0.5,xslant=-1},yslant=0.5,xslant=-1
            ]
                  \fill[white,fill opacity=.9] (0,0) rectangle (6,6);
        \draw[black,very thick] (0,0) rectangle (6,6);
        \draw[step=1cm, black] (0,0) grid (6,6);
       \fill[tuerkis] (0.1,0.1) rectangle (0.9,0.9);
       	\fill[tuerkis] (1.1,0.1) rectangle (1.9,0.9);
	\fill[tuerkis] (2.1,0.1) rectangle (2.9,0.9);
        \fill[tuerkis] (0.1,1.1) rectangle (0.9,1.9);

        \draw[->,thick] (0,0) -- (6.5,0) node[right] {$\mathsf{\alpha_1}$};
    	\draw[->,thick] (0,0) -- (0,6.5) node[above] {$\mathsf{\alpha_2}$};
    	\foreach \x/\xtext in {0/0, 1/1, 2/2, 3/3, 4/4, 5/5}
    	\draw[shift={(\x+0.5,0)}] (0pt,2pt) -- (0pt,-2pt) node[below] {$\xtext$};
    	\foreach \y/\ytext in {0/0, 1/1, 2/2, 3/3, 4/4,5/5}
    	\draw[shift={(0,\y+0.5)}] (2pt,0pt) -- (-2pt,0pt) node[left] {$\ytext$};
    \end{scope}

    \begin{scope}[
    	yshift=-25,every node/.append style={
    	    yslant=0.5,xslant=-1},yslant=0.5,xslant=-1
    	             ]
	    \fill[white,fill opacity=0.9] (0,0) rectangle (6,6);
        \draw[step=1cm, black] (0,0) grid (6,6); %
        \draw[black,very thick] (0,0) rectangle (6,6);%
        \fill[tuerkis] (0.1,0.1) rectangle (0.9,0.9);
       	\fill[tuerkis] (1.1,0.1) rectangle (1.9,0.9);
	\fill[tuerkis] (2.1,0.1) rectangle (2.9,0.9);
	\fill[tuerkis] (3.1,0.1) rectangle (3.9,0.9);
	\fill[tuerkis] (4.1,0.1) rectangle (4.9,0.9);
	\fill[tuerkis] (5.1,0.1) rectangle (5.9,0.9);
        \fill[tuerkis] (0.1,1.1) rectangle (0.9,1.9);
        \fill[tuerkis] (1.1,1.1) rectangle (1.9,1.9);
        \fill[tuerkis] (2.1,1.1) rectangle (2.9,1.9);
        \fill[tuerkis] (3.1,1.1) rectangle (3.9,1.9);
        \fill[tuerkis] (0.1,2.1) rectangle (0.9,2.9);
        \fill[tuerkis] (1.1,2.1) rectangle (1.9,2.9);
    \end{scope}

    \draw[-latex,thick] (6.5,4) node[right]{$\mathsf{\alpha_3=0}$}
         to[out=180,in=90] (4,1.5);

    \draw[-latex,thick](7.5,-2)node[right]{$\mathsf{\alpha_3=1}$}
        to[out=180,in=90] (4,-1.5);
\end{tikzpicture}
\caption{\label{fig:indicesweighted} The \(10\) indices contained in the weighted sparse grid
\(X_{(1,2.5)}(5,2)\)
on the left and the \(16\) indices contained in \(X_{(1,2,3)}(5,3)\) on the right.}
\end{center}
\end{figure}

The computation of the anisotropic sparse grid quadrature operator
\eqref{eq:anismolop} depends on the choice of the weight vector
\({\bf w}\) and the sequence \(\big\{N_j\big\}_j\) in \eqref{eq:interpp}.
In view of the one-dimensional error estimate \eqref{eq:assgaussquad} 
and to keep the number of quadrature points low, the sequence 
\(\big\{N_j\big\}_j\) is chosen slowly increasing in accordance with
\begin{equation}\label{eq:choicequadsequence}
N_j=\bigg\lceil\frac{1}{2}(j+2)\bigg\rceil.
\end{equation}
This choice implies \(N_j=N_{j+1}\) for \(j\) odd. 
Therefore, many tensor products of difference quadratures 
in \eqref{eq:anismolop} vanish, see  Remark~\ref{rem:notSharp}.

Then, we can find an upper bound for the contribution of the difference 
quadrature operator
\(\Delta_{j}= \intQ_{j}-\intQ_{j-1}\) for all \(j\ge1\) and for all
functions \(f_1\colon\Gamma\to \mathbb{R}\) which
are analytically extendable into \(\Sigma_1\) according to
\begin{equation}\label{eq:singleerr}
\begin{aligned}
|\Delta_{j}f_1| &\le |\intI f_1-\intQ_{j} f_1|+|\intI f_1-\intQ_{j-1} f_1|\\
&\le c(\kappa_1) \Big(e^{-\log(\kappa_1)(j+1)}+e^{-\log(\kappa_1) j}\Big)
\|f_1\|_{C(\Sigma_1)}\\
&\le c(\kappa_1)\Big(1+e^{-\log(\kappa_1)}\Big)e^{-\log(\kappa_1) j}\|f_1\|_{C(\Sigma_1)}\\
&\le
\frac{\kappa_1+1}{\kappa_1}c(\kappa_1)e^{-\log(\kappa_1) j}\|f_1\|_{C(\Sigma_1)} 
\le 2 c(\kappa_1)e^{-\log(\kappa_1) j}\|f_1\|_{C(\Sigma_1)} .
\end{aligned}
\end{equation}
For \(j=0\), the difference quadrature operator coincides
with the function evaluation at the midpoint \(z=0\) of \(\Gamma\) which implies that
\begin{equation}\label{eq:singleerr0}
|\Delta_{0}f_1|=|\intQ_{0} f_1|=|f_1(0)|\le
e^{-\log(\kappa_1)\cdot 0} \|f_1\|_{C(\Sigma_1)}.
\end{equation}
Analogously, it follows from \eqref{eq:assgaussquad} and \eqref{eq:choicequadsequence}
 that 
\begin{equation}\label{eq:singleerr2}
|\intI f_1-\intQ_{j} f_1|\le c(\kappa_1) e^{-\log(\kappa_1) (j+1)}
\|f_1\|_{C(\Sigma_1)}.
\end{equation}

Next, let us consider the multivariate integrand \(f_m\colon\Gamma^m\to\mathbb{R}\)
which can be analytically extended into the region
\(\boldsymbol{\Sigma}_m\).
Due to the analyticity in a tensor product domain, it follows that the contribution 
of the tensor product of the operators \(\Delta_{j}\) is bounded by the product of the
one-dimensional contributions.
Indeed, we obtain that
\begin{equation}\label{eq:multerr}
\begin{aligned}
&\Big|\Big(\Delta_{\alpha_1}^{(1)}\otimes\cdots\otimes
\Delta_{\alpha_m}^{(m)}\Big)f_m\Big|\\
&\qquad\le\big(2c(\kappa_n)\big)^{\min(1,\alpha_1)}
e^{-\log(\kappa) \alpha_1}\sup_{z\in\Sigma_1}
\Big|\Big(\operatorname{Id}\otimes\Delta_{\alpha_2}^{(2)}\otimes\cdots\otimes
\Delta_{\alpha_m}^{(m)}\Big)f_m(z)\Big|
\\
&\qquad\le \bigg(\prod_{n=1}^m 
\big(2c(\kappa_n)\big)^{\min(1,\alpha_n)}\bigg)
e^{-\sum_{n=1}^m \log(\kappa_n)\alpha_n } \|f_m\|_
{C(\boldsymbol{\Sigma}_m)}
\end{aligned}
\end{equation}
with \( \|f_m\|_{C(\boldsymbol{\Sigma}_m)}
\isdef \sup_{{\bf z}\in \boldsymbol{\Sigma}_m}|f_m({\bf z})|\).
In addition, we take the minimum in \eqref{eq:multerr} in order to ensure that the constant
is \(1\) if \(\alpha_n=0\) in accordance with \eqref{eq:singleerr0}.

\section{Error estimation for the anisotropic sparse grid quadrature}
\label{sec:errana}
For the estimation of the quadrature error of the anisotropic 
sparse grid quadrature, we employ the following lemma.

\begin{lemma}\label{lem:costhsumm}
Let \(\{\psi_n\}_n\in\ell^1(\mathbb{N}^*)\) be a summable sequence of positive real numbers.
Then, there exists for each \(\delta>0\) a constant \(c(\delta)\) independent of \(q\ge 1\)
such that
\begin{equation}\label{eq:boundprod}
\prod_{n=1}^\infty (q \psi_n+1)
\le c(\delta) \exp(\delta q).
\end{equation}
\end{lemma}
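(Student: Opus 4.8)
The plan is to take logarithms and thereby reduce the claimed product bound to the additive estimate
\[
\sum_{n=1}^\infty \log(q\psi_n+1)\le \delta q+\log c(\delta).
\]
The first thing to notice is that the naive factorwise bound \(\log(1+x)\le x\) applied to every term only yields \(\sum_n\log(q\psi_n+1)\le q\sum_n\psi_n\), hence \(\exp\bigl(q\sum_n\psi_n\bigr)\). This is linear in \(q\), but with the \emph{fixed} slope \(\sum_n\psi_n\), which in general exceeds the prescribed \(\delta\). Obtaining an arbitrarily small exponential rate is exactly where the summability of \(\{\psi_n\}\) must be exploited, by splitting the sum into a finite head and a tail.

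First I would fix \(\delta>0\) and, using \(\{\psi_n\}\in\ell^1(\mathbb{N}^*)\), choose an index \(M=M(\delta)\) so large that \(\sum_{n>M}\psi_n\le \delta/2\); this is possible precisely because the tail of a summable series is a null sequence. For the tail indices \(n>M\), the elementary inequality \(\log(1+x)\le x\) then gives
\[
\sum_{n>M}\log(q\psi_n+1)\le q\sum_{n>M}\psi_n\le \frac{\delta}{2}\,q,
\]
which already has the desired linear form with slope \(\delta/2\) and no additive constant.

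For the finite head \(n\le M\), I would use that a logarithm grows more slowly than any positive-slope linear function: for every \(a>0\) and every \(\varepsilon>0\) the continuous map \(q\mapsto \log(1+aq)-\varepsilon q\) tends to \(-\infty\) as \(q\to\infty\) and is therefore bounded above by some constant \(C(a,\varepsilon)\) on \([0,\infty)\). Applying this with \(a=\psi_n\) and \(\varepsilon=\delta/(2M)\) to each of the finitely many head terms yields
\[
\sum_{n=1}^{M}\log(q\psi_n+1)\le \frac{\delta}{2}\,q+\sum_{n=1}^{M}C\Bigl(\psi_n,\tfrac{\delta}{2M}\Bigr),
\]
and I would set \(\log c(\delta)\isdef\sum_{n=1}^{M}C(\psi_n,\delta/(2M))\), a quantity that depends on \(\delta\) through \(M\) and the \(\psi_n\) but not on \(q\). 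Adding the head and tail estimates gives \(\sum_{n=1}^\infty\log(q\psi_n+1)\le \delta q+\log c(\delta)\), and exponentiating proves \eqref{eq:boundprod}.

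The only genuine obstacle is conceptual rather than computational: one must realise that the crude factorwise bound cannot reach an arbitrary threshold \(\delta\), and that summability has to be used to push the effective slope of the tail below \(\delta/2\). Once the split is made, both pieces are routine, the head contributing merely a \(q\)-independent constant because its finitely many logarithmic terms are absorbed into the linear term at the cost of this constant.
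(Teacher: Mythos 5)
Your proof is correct and follows essentially the same route as the paper: both split the product (equivalently, the logarithmic sum) at a summability-determined index, bound the tail by \(\exp(\delta_1 q)\) via \(\log(1+x)\le x\), and absorb the finite head --- a polynomial in \(q\), or equivalently finitely many logarithms --- into \(c(\delta)\exp(\delta_2 q)\) with an arbitrarily small rate. The only difference is cosmetic: you work additively after taking logarithms, while the paper argues multiplicatively on the product itself.
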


\begin{proof}
Let \(0<\delta_1,\delta_2<\delta\) be arbitrary such that \(\delta_1
+\delta_2=\delta\). From the summability of \(\{\psi_n\}_n\), it follows 
that there exists a \(j_0 = j_0(\delta_1)\in \mathbb{N}\) such that 
\begin{equation}\label{eq:truncwithj}
\sum_{n=j_0+1}^{\infty}\psi_n \le \delta_1.
\end{equation}
We now split the left-hand side in \eqref{eq:boundprod} into
\begin{equation}\label{eq:costcompGH}
\prod_{n=1}^\infty ( q\psi_n+1)=
\prod_{n=1}^{j_0} ( q\psi_n+1)
\prod_{{n}=j_0+1}^\infty ( q\psi_n+1).
\end{equation}
Then, the second factor can simply be estimated by
\begin{align*}
\prod_{n=j_0+1}^\infty ( q\psi_n+1)= \exp\bigg(\sum_{n=j_0+1}^\infty \log(q\psi_n+1)\bigg)\le
\exp(\delta_1 q).
\end{align*}
The number of factors \(j_0\) in the first product on the right-hand side
of \eqref{eq:costcompGH} 
is fixed and depends only on the choice of \(\delta_1\) and on the decay 
properties of \(\{\psi_k\}_k\). Since \(j_0\) is a fixed natural number, there 
exists for all \(\delta_2>0\) a constant \(c(\delta_1,\delta_2)\) such that
\[
\prod_{{n}=1}^{j_0} (q\psi_n+1)
\le c(\delta_1,\delta_2) \exp(\delta_2 q).
\]
Hence, we obtain that
\[
\prod_{n=1}^\infty ( q\psi_n+1)\le c(\delta_1,\delta_2) \exp(\delta q).
\]
Since \(0<\delta_1,\delta_2<\delta\) can be chosen arbitrary with the only limitation that
\(\delta_1+\delta_2=\delta\), the choice \(c(\delta)=\inf_{\delta_1+\delta_2=\delta } c(\delta_1,\delta_2)\)
yields the desired estimate.
\end{proof}

With the above preliminaries and further assumptions on the summability 
of the sequence \(\{\tau_n\}_n\), we are able to establish error
estimates for the anisotropic sparse grid quadrature.  

\begin{assumption}\label{ass:tauandgandh}
The sequence \(\{\tau_n\}_n\), which describes the regions
of analytic extendability of the function \(f\), fulfills
\[
\tau_n\ge c n^{r}
\]
for some \(r>1\) and a constant \(c>0\). Hence, the sequences \(\{\tau^{-1}_n\}_n\) 
and \(\{(\kappa_n-1)^{-1}\}_n\) are summable.
\end{assumption}

For the error estimation, we apply an identity which was used in 
\cite{NTW08b} to bound the error of a collocation approach. 
Additionally, we exploit Assumption \ref{ass:tauandgandh} to 
obtain an estimate which is exponentially decreasing in the sparse 
grid level \(q\) and does not depend on the dimensionality \(m\) at all. 
Therefore, we further notice that the integration operator 
\({\bf I}\colon C(\boldsymbol{\Sigma}_m)\to \mathbb{R}\) is obviously continuous 
with continuity constant \(1\).

\begin{lemma}\label{lem:recest}
Let the sequence of quadrature points be chosen as in
\eqref{eq:choicequadsequence} and let the weight vector
\({\bf w}\) be given by \(w_n=\log(\kappa_n)\). Then, there 
exists for each \(\delta>0\) a constant \(c(\delta)\) independent 
of \(m\) such that the error of the anisotropic sparse grid quadrature
\eqref{eq:smolop} is bounded by
\begin{equation}\label{eq:errrep}
\big|\big({\bf I}-\mathcal{A}_{\bf w}(q,m)\big) f_m\big| 
\le c(\delta,\btau) e^{-q(1-\delta)} \|f_m\|_{C(\boldsymbol{\Sigma}_m)}
\end{equation} 
with 
\[
c(\delta,\btau) = 4c(\delta)\|\{\tau_n^{-1}\}_n\|_{\ell^1}
\]
and \(c(\delta)\) denotes the constant from Lemma \ref{lem:costhsumm} 
with respect to the summable sequence \(\{\tau_n^{-1}\}_n\). 
Note that the constant \(c(\delta,\btau)\) 
tends to infinity as \(\delta\) tends to \(0\).
\end{lemma}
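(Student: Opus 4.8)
The plan is to reduce the operator error to a single scalar tail sum over the lattice points that $X_{\bf w}(q,m)$ omits, and then to show that this sum decays like $e^{-q}$ up to a polynomial factor in $q$ that Lemma~\ref{lem:costhsumm} turns into an $m$-independent constant. Concretely, combining the telescoping identity \eqref{eq:exprint} with the definition \eqref{eq:anismolop} of $\mathcal{A}_{\bf w}(q,m)$ gives
\[
\big({\bf I}-\mathcal{A}_{\bf w}(q,m)\big)f_m=\sum_{\substack{\balpha\in\mathbb{N}^m\\ \sum_{n=1}^m\alpha_n w_n>q}}\Delta_{\alpha_1}^{(1)}\otimes\cdots\otimes\Delta_{\alpha_m}^{(m)}f_m ,
\]
since $X_{\bf w}(q,m)$ collects exactly the indices with $\sum_n\alpha_n w_n\le q$. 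Applying the triangle inequality together with the product bound \eqref{eq:multerr} and the choice $w_n=\log(\kappa_n)$ then yields
\[
\big|\big({\bf I}-\mathcal{A}_{\bf w}(q,m)\big)f_m\big|\le \|f_m\|_{C(\boldsymbol{\Sigma}_m)}\sum_{\sum_n\alpha_n w_n>q}\ \prod_{n=1}^m\big(2c(\kappa_n)\big)^{\min(1,\alpha_n)}e^{-\sum_{n=1}^m w_n\alpha_n},
\]
so that everything rests on estimating the scalar sum $S$ on the right.

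The heart of the argument is to show that $S$ obeys a bound of the shape
\[
S\ \le\ 4\,\|\{\tau_n^{-1}\}_n\|_{\ell^1}\Big(\prod_{n=1}^m\big(1+q\,\tau_n^{-1}\big)\Big)e^{-q}.
\]
To obtain it I would not try to factorise the sum — the single linear constraint $\sum_n w_n\alpha_n>q$ couples all coordinates — but instead process the coordinates successively, summing the innermost geometric series and reducing the threshold $q\mapsto q-w_n\alpha_n$ at each step, which is the concrete form of the identity used in \cite{NTW08b}. Because $\{\tau_n^{-1}\}_n$ is summable by Assumption~\ref{ass:tauandgandh}, each fully relaxed one-dimensional sum $\sum_{\alpha\ge1}e^{-w_n\alpha}$ contributes only a uniformly bounded factor, while the active constraint supplies the decisive $e^{-q}$ and spreads the admissible indices over a region whose weighted count is controlled by $\prod_n(1+q\tau_n^{-1})$. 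The prefactor $4\|\{\tau_n^{-1}\}_n\|_{\ell^1}$ collects the one-dimensional constants through the estimate $c(\kappa_n)<4/\tau_n$ recorded after \eqref{eq:assgaussquad}.

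It then only remains to apply Lemma~\ref{lem:costhsumm} to the summable sequence $\psi_n=\tau_n^{-1}$: for every $\delta>0$ there is a constant $c(\delta)$, independent of $q$ and of $m$, with $\prod_{n=1}^m(1+q\tau_n^{-1})\le\prod_{n=1}^\infty(q\psi_n+1)\le c(\delta)e^{\delta q}$. Substituting this into the bound for $S$ turns the polynomial factor into $c(\delta)e^{\delta q}$ and leaves
\[
\big|\big({\bf I}-\mathcal{A}_{\bf w}(q,m)\big)f_m\big|\le 4c(\delta)\|\{\tau_n^{-1}\}_n\|_{\ell^1}\,e^{-q(1-\delta)}\,\|f_m\|_{C(\boldsymbol{\Sigma}_m)},
\]
which is exactly \eqref{eq:errrep} with $c(\delta,\btau)=4c(\delta)\|\{\tau_n^{-1}\}_n\|_{\ell^1}$, and the blow-up of $c(\delta,\btau)$ as $\delta\to0$ is inherited from $c(\delta)$ in Lemma~\ref{lem:costhsumm}.

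The main obstacle is the core estimate of the second paragraph: the constraint $\sum_n w_n\alpha_n>q$ is a single inequality linking all $m$ directions, so the tail sum does not split into a product and a naive dimension-wise bound degrades with $m$. The whole point is to trade the exponential decay of \eqref{eq:multerr} against the growth of the number of admissible multi-indices, and it is precisely the summability of $\{\tau_n^{-1}\}_n$, fed through Lemma~\ref{lem:costhsumm}, that keeps this trade-off uniform in $m$. Some care is also needed to extract the clean factor $e^{-q}$ while retaining enough geometric decay in the relaxed directions for the remaining sums to converge for \emph{arbitrarily} small $\delta$; keeping the exponent $\min(1,\alpha_n)$ in \eqref{eq:multerr}, so that inactive directions cost nothing, is exactly what makes this possible.
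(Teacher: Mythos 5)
Your outline is correct and rests on the same two pillars as the paper's own proof: the coordinate\mbox{-}by\mbox{-}coordinate peeling of the tail (the identity from \cite{NTW08b}), in which the first violated constraint produces the factor \(e^{-q}\) through the cancellation \(e^{-w_n(\lfloor (q-A)/w_n\rfloor+1)}e^{-A}\le e^{-q}\) with \(A=\sum_{k<n}\alpha_k w_k\), and Lemma~\ref{lem:costhsumm} applied to the residual box count \(\prod_k\big(1+2c(\kappa_k)q/w_k\big)\) to absorb the polynomial growth into \(c(\delta)e^{\delta q}\). The genuine difference is your starting decomposition: you expand the error termwise over all of \(\mathbb{N}^m\setminus X_{\bf w}(q,m)\) — this is \eqref{eq:erraniso}, which the paper reserves for the \(\#X_{\bf w}(q,m)\)-based estimate \eqref{eq:errestGO} — and then bound a purely scalar sum, whereas the paper works at the operator level with \eqref{eq:reorderingerror}, keeping the exact integration operators \(\intI^{(k)}\) (continuity constant \(1\)) in the directions trailing the first violating coordinate. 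Your route must therefore also sum \(\sum_{\alpha_k\ge0}(2c(\kappa_k))^{\min(1,\alpha_k)}e^{-w_k\alpha_k}=1+2c(\kappa_k)/(\kappa_k-1)\) over those trailing directions; this product converges \(m\)-independently because \(\{(\kappa_k-1)^{-1}\}_k\) is summable by Assumption~\ref{ass:tauandgandh}, but it inflates the final constant, so you do not land exactly on \(c(\delta,\btau)=4c(\delta)\|\{\tau_n^{-1}\}_n\|_{\ell^1}\) — you should either report the enlarged constant or use the exact operators \(\intI^{(k)}\) for \(k>n\) as the paper does. Be aware also that the display you call the ``heart of the argument'' is asserted rather than derived; the mechanism you describe (innermost geometric series, threshold reduction \(q\mapsto q-w_n\alpha_n\), box bound on the preceding admissible indices) is precisely where the paper does its work, and while it does go through, it needs to be written out for the argument to be complete.
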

\begin{proof}
In the same way as in \cite{NTW08b},
the error of the sparse grid quadrature is rewritten,
with the notation \({\bf I}=\bigotimes_{n=1}^m \intI^{(n)}\), by
\begin{equation}\label{eq:reorderingerror}
{\bf I}-\mathcal{A}_{\bf w}(q,m)=\sum_{n=1}^m R(q,n) \bigotimes_{
k=n+1}^{m} \intI^{(k)}.
\end{equation}
Herein, the quantity \(R(q,n)\) is defined for \(n=1\) by
\[
R(q,1)\isdef \intI^{(1)}-\intQ_{\lfloor q/w_1 \rfloor}
\]
and for \(n\ge 2\) by
\[
R(q,n)\isdef\sum_{\balpha\in X_{{\bf w}_{1:n-1}}(q,n-1)}
\bigotimes_{k=1}^{n-1} \Delta_{\alpha_k}^{(k)}
\otimes\bigg(\intI^{(n)}-\intQ_{\big\lfloor \big(q-
\sum_{k=1}^{n-1}\alpha_k w_{k}\big)/w_n\big\rfloor}\bigg).
\]

Due to \(R(q,1)=\intI^{(1)}-\intQ_{\lfloor {q}/{w_1}\rfloor}\), 
we deduce for the first summand in \eqref{eq:reorderingerror} 
that
\[
\bigg|\bigg(R(q,1)\bigotimes_{k=2}^{m} \intI^{(k)}\bigg)
f_m\bigg|
\le c(\kappa_1) e^{-\log(\kappa_1)(\lfloor
{q}/{w_1}\rfloor+1)}\|f_m\|_{C(\boldsymbol{\Sigma}_m)}
\le c(\kappa_1) e^{-q}\|f_m\|_{C(\boldsymbol{\Sigma}_m)}.
\]
The summands in \eqref{eq:reorderingerror} with \(n\ge2\) can be 
estimated with \eqref{eq:singleerr2}, \eqref{eq:multerr} and with the 
continuity of the integration operator by 
\begin{align*}
\Bigg|\Bigg( R(q,n) \bigotimes_{k=n+1}^{m} \intI^{(k)}\Bigg)f_m\Bigg|
&\le
\sum_{\balpha\in X_{{\bf w}_{1:n-1}}(q,n-1)}
\bigg(\prod_{k=1}^{n-1} \big(2c(\kappa_k)\big)^{\min(1,\alpha_k)}\bigg) e^{-\sum_{k=1}^{n-1} \alpha_k \log(\kappa_k)}\\
&\phantom{\le}\qquad\qquad\qquad\cdot c(\kappa_n) e^{-\log(\kappa_n) \big(\big\lfloor
\big(q-\sum_{k=1}^{n-1}\alpha_{k} w_{k}\big)/w_n\big\rfloor+1\big)}
\|f_m\|_{C(\boldsymbol{\Sigma}_m)}\\
&\le  c(\kappa_n) \sum_{\balpha\in X_{{\bf w}_{1:n-1}}(q,n-1)}
e^{-\log(\kappa_n) \big(\big\lfloor
\big(q-\sum_{k=1}^{n-1}\alpha_{k} w_{k}\big)/w_n\big\rfloor+1\big)-
\sum_{k=1}^{n-1} \alpha_k  \log(\kappa_k)}\\
&\phantom{\le}\qquad\qquad\qquad
\cdot\bigg(\prod_{k=1}^{n-1} \big(2c(\kappa_k)\big)^{\min(1,\alpha_k)}\bigg)\|f_m\|_{C(\boldsymbol{\Sigma}_m)}.
\end{align*}
With the choice \(w_k=\log(\kappa_k)\) for all \(k=1,\ldots,m\), it follows that
\begin{align*}
&\bigg|\bigg( R(q,n) \bigotimes_{k=n+1}^{m}\intI^{(k)}\bigg)f_m\bigg|\\
&\qquad \le c(\kappa_n)\sum_{\balpha\in X_{{\bf w}_{1:n-1}}(q,n-1)}
e^{-q-\sum_{k=1}^{n-1}\alpha_{k}w_{k}+\sum_{k=1}^{n-1}
\alpha_{k} w_{k}}
\bigg(\prod_{k=1}^{n-1}\big(2c(\kappa_k)\big)^{\min(1,\alpha_k)}\bigg)\|f_m\|_{C(\boldsymbol{\Sigma}_m)}\\
&\qquad=c(\kappa_n)\sum_{\balpha\in X_{{\bf w}_{1:n-1}}(q,n-1)}e^{-q}
\bigg(\prod_{k=1}^{n-1} \big(2c(\kappa_k)\big)^{\min(1,\alpha_k)}\bigg)\|f_m\|_{C(\boldsymbol{\Sigma}_m)}.
\end{align*}
It remains to estimate
\[
\sum_{\balpha\in X_{{\bf w}_{1:n-1}}(q,n-1)}\bigg(\prod_{k=1}^{n-1} 
\big(2c(\kappa_k)\big)^{\min(1,\alpha_k)}\bigg)
\le \sum_{\balpha\in X_{{\bf w}}(q,m)}\bigg(\prod_{k=1}^{m} 
\big(2c(\kappa_k)\big)^{\min(1,\alpha_k)}\bigg).
\]
The expression inside the product always equals \(2c(\kappa_k)\) except for the case \(\alpha_k=0\).
Hence, it follows that
\begin{align*}
\sum_{\balpha\in X_{{\bf w}}(q,m)}\bigg(\prod_{k=1}^{m} 
\big(2c(\kappa_k)\big)^{\min(1,\alpha_k)}\bigg)
&\le \sum_{\alpha_1=0}^{\lfloor \frac{q}{w_1}\rfloor} \big(2c(\kappa_1)\big)^{\min(\alpha_1,1)}
\cdots
\sum_{\alpha_{m}=0}^{\lfloor \frac{q}{w_{m}}\rfloor}\big(2c(\kappa_m)\big)^{\min(\alpha_{m},1)}\\
&\le \prod_{k=1}^{m} 
\bigg(\frac{2c(\kappa_k) q}{w_k}+1\bigg)\le  
c(\delta) \exp(\delta q).
\end{align*}
The last inequality holds since \(\{2c(\kappa_k)/w_k\}_k\) is summable and, thus, Lemma
\ref{lem:costhsumm} is applicable. 

Combining our findings, we obtain that 
\begin{align*}
\big|\big({\bf I}-\mathcal{A}_{\bf w}(q,m)\big) f_m\big| 
&\le c(\delta)e^{-q(1-\delta)} \|f_m\|_{C(\boldsymbol{\Sigma}_m)}
\sum_{n=1}^m c(\kappa_n)\\
&\le c(\delta)e^{-q(1-\delta)} \|f_m\|_{C(\boldsymbol{\Sigma}_m)}
\sum_{n=1}^m \frac{4}{\tau_n},
\end{align*}
which yields the estimate \eqref{eq:errrep}. 
\end{proof}

Lemma \ref{lem:recest} implies that the anisotropic sparse grid quadrature
converges exponentially with respect to the level \(q\). The convergence
in Lemma \ref{lem:recest} is nearly as good as the convergence of the
anisotropic tensor product quadrature on level \(q\), with \(\lceil \frac{q}{2w_n} +
\frac{1}{2} \rceil\)
quadrature points in the \(n\)-th dimension.

In addition, we provide an estimate on the quadrature error in terms of the number of 
multiindices contained in the anisotropic sparse index set. Note that this is very similar 
to the analysis in \cite{BNTT12, GO16}. From \eqref{eq:exprint}, we deduce 
that the error of the anisotropic sparse grid quadrature can be written as 
\begin{equation}\label{eq:erraniso}
\big|\big({\bf I}-\mathcal{A}_{\bf w}(q,m)\big) f_m\big| = 
\bigg| \sum_{\balpha\in\mathbb{N}^m\setminus X_{\bf w}(q,m)} 
\big(\Delta_{\alpha_1}^{(1)}\otimes\cdots\otimes
\Delta_{\alpha_m}^{(m)}\big) f_m \bigg|.
\end{equation}
With estimate \eqref{eq:multerr}, we obtain 
\begin{align*}
\big|\big({\bf I}-\mathcal{A}_{\bf w}(q,m)\big) f_m\big|&\le 
\sum_{\balpha\in\mathbb{N}^m\setminus X_{\bf w}(q,m)} 
\bigg(\prod_{n=1}^m \big(2c(\kappa_n)\big)^{\min(1,\alpha_n)}\bigg)
e^{-\sum_{n=1}^m \log(\kappa_n)\alpha_n } \|f_m\|_
{C(\boldsymbol{\Sigma}_m)}\\ 
&\le c({\bkappa})  \|f_m\|_
{C(\boldsymbol{\Sigma}_m)}\sum_{\balpha\in\mathbb{N}^m\setminus X_{\bf w}(q,m)} 
e^{-\sum_{n=1}^m \log(\kappa_n)\alpha_n }
\end{align*}
with 
\[
c({\bkappa})\isdef \sup_{\alpha\in \mathbb{N}^m} 
\prod_{n=1}^m \big(2c(\kappa_n)\big)^{\min(1,\alpha_n)}. 
\]
Due to the summability properties of \(\{\tau_n\}_{n}\), the constant \(c({\bkappa})\) can obviously 
be bounded independent of the dimension \(m\). 
It remains to estimate the sum in the above estimate which has been extensively studied in \cite{GO16}. 
The following result from \cite{GO16} is particularly useful for the considered situation. 
\begin{theorem}\label{thm:GO}
Let the sequence \({\bf w}=\{w_n\}_n\) of positive, real numbers be ordered, i.e.~\(w_i\le w_j\) for \(i\le j\). 
If there exists 
a real number \(\beta>1\) such that 
\begin{equation}\label{eq:constGO}
M({\bf w},\beta)\isdef \sum_{n=1}^\infty \frac{1}{e^{w_n/\beta}-1} <\infty,  
\end{equation}
it holds that 
\begin{equation}\label{eq:algconv}
\sum_{\balpha\in\mathbb{N}_0^{\infty}\setminus X_{\bf w}(q,\infty)} 
e^{-\sum_{n=1}^\infty w_n\alpha_n }\le \frac {1}{\beta} e^{\beta M({\bf w},\beta)} \#  X_{\bf w}(q,\infty)^{-(\beta-1)}. 
\end{equation}
\end{theorem}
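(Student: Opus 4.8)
The plan is to prove the bound by a Chernoff-type splitting of the tail sum, exploiting the slack \(\beta>1\) to trade fast exponential decay in \(q\) against an algebraic factor in the cardinality. Write \(\ell(\balpha)\isdef\sum_{n=1}^\infty w_n\alpha_n\) for the (finite) weighted length of a finitely supported multi-index. For every \(\balpha\) with \(\ell(\balpha)>q\) I would split off the fraction \(1/\beta\) of the exponent,
\[
e^{-\ell(\balpha)}=e^{-\ell(\balpha)/\beta}\,e^{-(1-1/\beta)\ell(\balpha)}\le e^{-(1-1/\beta)q}\,e^{-\ell(\balpha)/\beta},
\]
which is valid because \(1-1/\beta>0\) and \(\ell(\balpha)>q\). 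Summing over the complement of \(X_{\bf w}(q,\infty)\) and then enlarging the range to all of \(\mathbb{N}_0^\infty\) gives
\[
\sum_{\balpha\notin X_{\bf w}(q,\infty)}e^{-\ell(\balpha)}\le e^{-(1-1/\beta)q}\sum_{\balpha\in\mathbb{N}_0^\infty}e^{-\ell(\balpha)/\beta}.
\]

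Second, I would factorise the majorising sum over the coordinate directions. Since each coordinate runs independently over \(\mathbb{N}\), a geometric series in each factor yields
\[
\sum_{\balpha\in\mathbb{N}_0^\infty}e^{-\ell(\balpha)/\beta}=\prod_{n=1}^\infty\frac{1}{1-e^{-w_n/\beta}}\defis P.
\]
The summability hypothesis \(M({\bf w},\beta)<\infty\) guarantees convergence of this product, and the elementary inequality \(\log(1+x)\le x\) applied to \(\tfrac{1}{1-e^{-w_n/\beta}}=1+\tfrac{1}{e^{w_n/\beta}-1}\) gives the clean bound
\[
\log P=\sum_{n=1}^\infty\log\Bigl(1+\tfrac{1}{e^{w_n/\beta}-1}\Bigr)\le\sum_{n=1}^\infty\frac{1}{e^{w_n/\beta}-1}=M({\bf w},\beta),
\]
so that \(P\le e^{M({\bf w},\beta)}\).

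Third, I would convert the decay \(e^{-(1-1/\beta)q}\) into the announced power of the cardinality by extracting a matching estimate on \(\#X_{\bf w}(q,\infty)\) from the \emph{same} product. Every \(\balpha\in X_{\bf w}(q,\infty)\) satisfies \(\ell(\balpha)\le q\), hence \(e^{-\ell(\balpha)/\beta}\ge e^{-q/\beta}\); summing only over \(X_{\bf w}(q,\infty)\) inside \(P\) gives \(\#X_{\bf w}(q,\infty)\,e^{-q/\beta}\le P\), i.e.\ \(e^{-q/\beta}\le P/\#X_{\bf w}(q,\infty)\). Raising this to the power \(\beta-1>0\) and using \((1-1/\beta)q=(\beta-1)(q/\beta)\) turns the first estimate into
\[
\sum_{\balpha\notin X_{\bf w}(q,\infty)}e^{-\ell(\balpha)}\le P\,\bigl(e^{-q/\beta}\bigr)^{\beta-1}\le P^{\beta}\,\#X_{\bf w}(q,\infty)^{-(\beta-1)}\le e^{\beta M({\bf w},\beta)}\,\#X_{\bf w}(q,\infty)^{-(\beta-1)}.
\]

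This already delivers the claimed algebraic rate \(\#X_{\bf w}(q,\infty)^{-(\beta-1)}\), but with the slightly larger constant \(e^{\beta M({\bf w},\beta)}\) in place of the stated \(\tfrac{1}{\beta}e^{\beta M({\bf w},\beta)}\). Sharpening the prefactor is the only real obstacle: the crude pointwise step \(e^{-(1-1/\beta)\ell(\balpha)}\le e^{-(1-1/\beta)q}\) discards the extra decay of the tail terms with \(\ell(\balpha)\gg q\), and it is precisely this discarded decay that accounts for the factor \(\beta^{-1}\). To recover it I would replace the pointwise bound by an integral comparison, writing the tail sum as a Stieltjes integral against the layer-counting function \(N(t)\isdef\#X_{\bf w}(t,\infty)\), inserting the uniform estimate \(N(t)\le P\,e^{t/\beta}\) obtained exactly as in the third step, and integrating; the decay in \(q\) then emerges with a genuine constant rather than being absorbed crudely. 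Since the two structural ingredients—the product bound \(P\le e^{M({\bf w},\beta)}\) and the uniform growth bound on \(N(t)\)—are common to both versions, the refinement to the sharp constant is a matter of carrying out this summation with care, and is the delicate point of the argument.
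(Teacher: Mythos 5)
First, note that the paper does not prove Theorem~\ref{thm:GO} at all: it is quoted from the reference \cite{GO16}, so there is no in-paper argument to compare yours against. Judged on its own, your derivation of the algebraic rate is correct and complete: the split \(e^{-\ell(\balpha)}\le e^{-(1-1/\beta)q}e^{-\ell(\balpha)/\beta}\) on the complement of \(X_{\bf w}(q,\infty)\), the factorisation of the majorant into the product \(P=\prod_{n}\bigl(1-e^{-w_n/\beta}\bigr)^{-1}\le e^{M({\bf w},\beta)}\), and the counting bound \(\#X_{\bf w}(q,\infty)\,e^{-q/\beta}\le P\) are all valid (and, as your argument implicitly shows, the monotonicity hypothesis on \({\bf w}\) is never needed for this part). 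This yields \eqref{eq:algconv} with the constant \(e^{\beta M({\bf w},\beta)}\) in place of \(\tfrac{1}{\beta}e^{\beta M({\bf w},\beta)}\), i.e.\ weaker by the factor \(\beta>1\). For everything the paper actually does with the theorem --- the estimate \eqref{eq:errestGO} and Theorem~\ref{theo:finalest}, where only the exponent \(-(\beta-1)\) and the \(m\)-independence of the constant matter --- your version is fully sufficient.

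The one genuine gap is the final refinement to the stated prefactor \(1/\beta\), and the route you sketch does not close it. If you replace the pointwise bound by the Stieltjes integral \(\int_{(q,\infty)}e^{-t}\,\d N(t)\) with \(N(t)\isdef\#X_{\bf w}(t,\infty)\le Pe^{t/\beta}\) and integrate by parts, the dominant term becomes \(P\int_q^\infty e^{-(1-1/\beta)t}\,\d t=\tfrac{\beta}{\beta-1}Pe^{-(1-1/\beta)q}\), so the resulting constant is \(\tfrac{\beta}{\beta-1}e^{\beta M({\bf w},\beta)}\) --- \emph{larger} than \(e^{\beta M({\bf w},\beta)}\), not smaller; the boundary term \(-e^{-q}N(q)\) that would have to compensate enters with a sign that cannot be exploited through the available \emph{upper} bound on \(N(q)\). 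An alternative standard route --- order the multi-indices so that \(\ell_1\le\ell_2\le\cdots\), deduce \(e^{-\ell_j}\le(P/j)^\beta\) from \(je^{-\ell_j/\beta}\le P\), and compare \(\sum_{j>\#X}j^{-\beta}\) with an integral --- gives \(\tfrac{1}{\beta-1}e^{\beta M({\bf w},\beta)}\), again not \(\tfrac{1}{\beta}e^{\beta M({\bf w},\beta)}\). So the sharp constant requires an ingredient beyond what you describe; you should either extract it from \cite{GO16} or state the theorem with the (for this paper harmless) constant \(e^{\beta M({\bf w},\beta)}\).
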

We apply Theorem \ref{thm:GO} with respect to the sequence \(w_n = \log(\kappa_n)\). Then, it follows 
from Assumption \ref{ass:tauandgandh} that the conditions of Theorem \(\ref{thm:GO}\) are fulfilled with 
\(\beta < r\) since then 
\begin{equation}\label{eq:summability}
 \sum_{n=1}^\infty \frac{1}{e^{\log(\kappa_n)/\beta}-1} \le 
 \sum_{n=1}^\infty \frac{1}{(1+cn^r)^{1/\beta}-1}<\infty. 
\end{equation}

Of course, Theorem \ref{thm:GO} is still valid when considering dimensions \(m<\infty\). 
In this case, even subexponential convergence rates can be proven which, however, 
depend on the dimension \(m\), see \cite{GO16} for the details. Especially, the appearing constants 
depend then on the dimensionality as well. Since we are interested in dimensionalities 
which might grow with the desired accuracy, it is reasonable to rely on estimates which 
do not depend on \(m\). To that end, we conclude from Theorem \ref{thm:GO} for 
all \(\beta<r\) that 
\begin{equation}\label{eq:errestGO}
\begin{aligned}
\big|\big({\bf I}-\mathcal{A}_{\bf w}(q,m)\big) f_m\big|
 &\le c({\bkappa})
\frac{e^{\beta M^{(m)}({\bf w},\beta)}}{\beta}\#  X_{\bf w}(q,m)^{-(\beta-1)}\|f_m\|_
{C(\boldsymbol{\Sigma}_m)}\\
&\le c({\bf w}, \beta) \#  X_{\bf w}(q,m)^{-(\beta-1)} \|f_m\|_
{C(\boldsymbol{\Sigma}_m)}
\end{aligned}
\end{equation}
with \(M^{(m)}({\bf w},\beta)\isdef \sum_{n=1}^m \frac{1}{e^{w_n/\beta}-1}\). 
Due to \eqref{eq:summability}, the latter constant 
is bounded independently of \(m\).

\section{Cost of the anisotropic sparse grid quadrature}\label{sec:cost}
\subsection{A preliminary estimate on the cost}
In order to find an error estimate in terms of the number of quadrature points, 
we additionally have to estimate the cost of the sparse grid quadrature
method for a given level \(q\).

In the following, we establish a bound on the number of quadrature
points used in the combination technique formula \eqref{eq:anismolop2}. 
This number is obviously bounded by 
\[
\operatorname{cost}\big(\mathcal{A}_{\bf w}(q,m) \big)
\le \sum_{\balpha\in Y_{\bf w}(q,m)} \prod_{n=1}^mN_{\alpha_n},
\]
which might be a rough upper bound since some of the coefficients in 
\eqref{eq:anismolop2} may vanish and since some of the quadrature points 
usually appear repeatedly in \eqref{eq:anismolop2}.  
Since \(Y_{\bf w}(q,m)
\subset X_{\bf w}(q,m)\), cf.~\eqref{eq:anisoset1} and \eqref{eq:anisoset2}, 
we can further estimate 
\begin{equation}\label{eq:numofpoi}
\begin{aligned}
\operatorname{cost}\big(\mathcal{A}_{\bf w}(q,m) \big)
&\le \sum_{\balpha\in Y_{\bf w}(q,m)} \prod_{n=1}^m \bigg\lceil\frac 1 2
(\alpha_n+2) \bigg\rceil
\le \sum_{\balpha\in Y_{\bf w}(q,m)} \prod_{n=1}^m (\alpha_n+1)\\
&\le \sum_{\balpha\in X_{\bf w}(q,m)} \prod_{n=1}^m (\alpha_n+1)
\le \bigg[\max_{\balpha\in X_{\bf w}(q,m)} \prod_{n=1}^m (\alpha_n+
1)\bigg] \#  X_{\bf w}(q,m).
\end{aligned}
\end{equation}
Note that there holds \(\bbeta\in X_{\bf w}(q,m)\) for all \({\bf 0}\le \bbeta \le \balpha\) whenever
\(\balpha\in X_{\bf w}(q,m)\). This property is often referred to as \emph{downward closedness} of the index set.
Hence, it follows that 
\[
 \#  X_{\bf w}(q,m) \ge \max_{\balpha\in X_{\bf w}(q,m)} \sum_{{\bf 0}\le \bbeta\le \balpha} 1 
 = \max_{\balpha\in X_{\bf w}(q,m)} \prod_{n=1}^m (\alpha_n+1). 
\]
As a consequence, the number of quadrature points in \eqref{eq:anisoset2} with a sequence of
univariate quadrature sequence, where the number of points are given by \eqref{eq:choicequadsequence}, 
is bounded by 
\begin{equation}\label{eq:numofpoi2}
\operatorname{cost}\big(\mathcal{A}_{\bf w}(q,m) \big)
\le  \#  X_{\bf w}(q,m)^2.
\end{equation}
We remark that a similar bound for downward closed index sets has also been 
used in \cite{EST16} in the context of a sparse adaptive collocation approximation.

\begin{remark}\label{rem:notSharp}
As indicated before, the estimate \eqref{eq:numofpoi2} is not sharp. Our numerical experiments indicate that
\(\operatorname{cost}\big(\mathcal{A}_{\bf w}(q,m)\big)\) depends rather linearly 
than quadratically on the number of multi-indices \( \#  X_{\bf w}(q,m)\), 
cf.~Figures \ref{fig:QuadProb2}--\ref{fig:QuadProb4} from the numerical examples. 
This is due to the fact that an exact representation for the cost
is given by 
\begin{equation}\label{eq:excost}
\operatorname{cost}\big(\mathcal{A}_{\bf w}(q,m) \big)
= \sum_{\balpha\in X_{\bf w}(q,m)} \prod_{n=1}^m \zeta_{\alpha_n} 
\end{equation}
where \(\zeta_{\alpha_n}\) denotes the number of quadrature points which belong to 
\(\intQ_{\alpha_n}^{(n)}\) but not to \(\intQ_{i}^{(n)}\) for any \(i<\alpha_n\). 
In our setting of the Gauss-Legendre quadrature, where the number of points is determined
by \eqref{eq:choicequadsequence}, this sequence is given by 
\[
\{\zeta_n\}_n = \{1,2,0,2,0,4,0,4,0,6,0,6,\ldots \}.
\]
Therefore, each summand in \eqref{eq:excost} vanishes whenever any \(\alpha_n\) is an even 
number greater than zero. 
\end{remark}

\subsection{An improved estimate on the anisotropic sparse index set}\label{sec:sharp}

In order to complete the convergence analysis, it remains to estimate the number of indices
in the set \(X_{\bf w}(q,m)\). To that end, we require the following lemma.

\begin{lemma}\label{lem:helplem}
For \(L\in \mathbb{N}\), \(m\in\mathbb{N}\) and \(\delta\in \mathbb{R}_+\), there holds the inequality
\[
\sum_{j=0}^{L-1} \prod_{n=1}^{m} (n+\delta+j)\le \frac{1}{m+1}\prod_{n=0}^{m}(L+\delta+n)
\]
with equality when \(\delta=0\).
\end{lemma}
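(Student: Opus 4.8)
The plan is to prove the inequality by induction on $m$, treating the summation-telescoping structure directly rather than attempting a closed-form evaluation of the left-hand side. The key observation is that the product $\prod_{n=1}^{m}(n+\delta+j)$ is, up to a shift, a rising factorial in the variable $j$, and the right-hand side $\frac{1}{m+1}\prod_{n=0}^{m}(L+\delta+n)$ has exactly the shape one expects from summing such a quantity. First I would verify the base case $m=0$, where the left-hand side is $\sum_{j=0}^{L-1}1 = L$ and the right-hand side is $\frac{1}{1}(L+\delta) = L+\delta$; this already exhibits the inequality (with equality precisely when $\delta=0$), and it suggests that the slack in the estimate is driven entirely by the positive shifts introduced by $\delta$.

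For the inductive step, the natural strategy is to recognise a discrete analogue of the Newton--Cotes / finite-difference identity
\[
\prod_{n=0}^{m}(L+\delta+n) - \prod_{n=0}^{m}(\delta+n)
= \sum_{j=0}^{L-1}\Big[\prod_{n=0}^{m}(j+1+\delta+n)-\prod_{n=0}^{m}(j+\delta+n)\Big],
\]
which telescopes. Each bracketed difference factors: writing $P_j \isdef \prod_{n=0}^{m}(j+\delta+n)$, one has
\[
P_{j+1}-P_j = \big[(j+\delta+m+1)-(j+\delta)\big]\prod_{n=1}^{m}(j+\delta+n)
= (m+1)\prod_{n=1}^{m}(n+\delta+j),
\]
since the two products $P_{j+1}$ and $P_j$ share the common factors $\prod_{n=1}^{m}(j+\delta+n)$ and differ only in the endpoint factors $(j+\delta+m+1)$ versus $(j+\delta)$. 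This is in fact an exact identity, not merely an inequality, and it is the engine of the whole argument.

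Summing this factored difference over $j=0,\ldots,L-1$ and dividing by $m+1$ gives
\[
\sum_{j=0}^{L-1}\prod_{n=1}^{m}(n+\delta+j)
= \frac{1}{m+1}\big(P_L - P_0\big)
= \frac{1}{m+1}\prod_{n=0}^{m}(L+\delta+n)-\frac{1}{m+1}\prod_{n=0}^{m}(\delta+n).
\]
Since $\delta\ge 0$, the subtracted term $\frac{1}{m+1}\prod_{n=0}^{m}(\delta+n)$ is nonnegative, which yields the claimed inequality; and it vanishes exactly when $\delta=0$ (because then the factor $n=0$ in the product is zero), giving the equality case. I expect the main obstacle to be purely bookkeeping: verifying that the index ranges of the two products in $P_{j+1}-P_j$ align so that all but the two endpoint factors cancel, and confirming that no off-by-one error creeps into the shift $n\mapsto n+1$. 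Once the telescoping identity is set up correctly, the inequality and the equality characterisation both fall out immediately, so no induction is strictly necessary—the telescoping sum already delivers the exact value.
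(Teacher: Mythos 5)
Your proof is correct, and it takes a genuinely different---and in fact slightly stronger---route than the paper's. The paper argues by induction on \(L\): the slack enters entirely at the base case \(L=1\) through the bound \(\tfrac{1}{m+1+\delta}\le\tfrac{1}{m+1}\), while the induction step is the exact identity \(\tfrac{L+\delta}{m+1}\prod_{n=1}^{m}(L+\delta+n)+\prod_{n=1}^{m}(L+\delta+n)=\tfrac{1}{m+1}\prod_{n=0}^{m}(L+1+\delta+n)\), which is precisely your one-step difference \(P_{j+1}-P_j=(m+1)\prod_{n=1}^{m}(n+\delta+j)\) in disguise. So the algebraic engine is the same, but you sum the differences all at once and obtain the closed form
\[
\sum_{j=0}^{L-1}\prod_{n=1}^{m}(n+\delta+j)=\frac{1}{m+1}\Big(\prod_{n=0}^{m}(L+\delta+n)-\prod_{n=0}^{m}(\delta+n)\Big),
\]
from which the inequality, the equality case at \(\delta=0\) (the \(n=0\) factor annihilates the remainder), and even strictness for \(\delta>0\) and \(L\ge 1\) all drop out at once. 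What your version buys is the exact remainder \(\tfrac{1}{m+1}\prod_{n=0}^{m}(\delta+n)\) quantifying the gap; the paper's induction yields only the one-sided bound. One small remark: your opening framing as an induction on \(m\) is a false start that you correctly abandon---the telescoping argument is self-contained and, as you note yourself, needs no induction at all.
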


\begin{proof}
We prove the assertion by induction on \(L\). For \(L=1\), we verify
\[
\prod_{n=1}^m(n+\delta) = \frac{1}{m+1+\delta}\prod_{n=1}^{m+1}(n+\delta)
\le \frac{1}{m+1}\prod_{n=0}^{m} (n+\delta+1).
\]
Let the assertion be fulfilled for \(L\). Then, we conclude for \(L+1\) that
\begin{align*}
\sum_{j=0}^{L}\prod_{n=1}^m(n+\delta+j)&\le \frac{L+\delta}{m+1}
\prod_{n=1}^{m}(L+\delta+n)+\prod_{n=1}^m (L+\delta + n)\\
&= \bigg(\frac{L+m+1+\delta}{m+1}\bigg)\prod_{n=1}^m (L+n+\delta)\\
&= \bigg(\frac{1}{m+1}\bigg)\prod_{n=1}^{m+1} (L+n+\delta)\\
&=\bigg(\frac{1}{m+1}\bigg)\prod_{n=0}^{m} (L+1+n+\delta).
\end{align*}
\end{proof}

The next lemma gives us a novel bound on the number of indices in \(X_{\bf w}(q,m)\).
\begin{lemma}\label{conjecture}
The cardinality of the set \(X_{\bf w}(q,m)\) in \eqref{eq:anisoset1},
where the weight vector \({\bf w}=[w_1,\ldots,w_m]\) is ascendingly ordered,
i.e.~\(w_1\le w_2\le\cdots\le w_m\), is bounded by
\begin{equation}\label{eq:estcardset1}
\# {X_{\bf w}(q,m)}\le \prod_{n=1}^m \bigg(\frac{ q}{nw_n}+1\bigg).
\end{equation}
\end{lemma}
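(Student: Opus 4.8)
The plan is to prove the bound
\[
\# X_{\bf w}(q,m)\le \prod_{n=1}^m\bigg(\frac{q}{nw_n}+1\bigg)
\]
by induction on the dimension \(m\), peeling off the last (heaviest-weighted) coordinate \(\alpha_m\) and using Lemma \ref{lem:helplem} to control the resulting sum over the slices. First I would set up the recursion. For a fixed value \(\alpha_m=j\), the remaining indices \((\alpha_1,\ldots,\alpha_{m-1})\) must satisfy \(\sum_{n=1}^{m-1}\alpha_n w_n\le q-jw_m\), i.e.\ they lie in \(X_{{\bf w}_{1:m-1}}(q-jw_m,m-1)\). Since \(w_m\) is the largest weight, \(\alpha_m\) ranges over \(0\le j\le \lfloor q/w_m\rfloor\), so
\[
\# X_{\bf w}(q,m)=\sum_{j=0}^{\lfloor q/w_m\rfloor}\# X_{{\bf w}_{1:m-1}}(q-jw_m,m-1).
\]
By the induction hypothesis each summand is bounded by \(\prod_{n=1}^{m-1}\big(\frac{q-jw_m}{nw_n}+1\big)\).

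Next I would massage this product into the form handled by Lemma \ref{lem:helplem}. The idea is that, because the weights are ordered ascendingly, \(w_n\le w_m\) for every \(n\le m-1\), so replacing \(w_n\) in the denominator by \(w_m\) only enlarges each factor and uniformizes the decay rate in \(j\): writing \(\frac{q-jw_m}{nw_n}+1 = \frac{1}{nw_n}\big((q-jw_m)+nw_n\big)\), one wants to bound the cross term so that the \(j\)-dependence becomes linear with slope \(-w_m\). Concretely I expect to pull out a factor \(\prod_{n=1}^{m-1}\frac{w_m}{nw_n}\) and reduce the sum to something proportional to \(\sum_{j=0}^{L-1}\prod_{n=1}^{m-1}(n+\delta+j)\) after the substitution \(j\mapsto L-1-j\) (reversing the summation order) and identifying \(L=\lfloor q/w_m\rfloor+1\) together with an appropriate offset \(\delta\ge 0\) coming from the constant terms. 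Lemma \ref{lem:helplem} then collapses the sum of the \((m-1)\)-fold product into a single \(m\)-fold product divided by \(m\), producing exactly the extra factor \(\frac{q}{mw_m}+1\) needed to advance the induction.

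The main obstacle will be the bookkeeping in this second step: getting the argument of Lemma \ref{lem:helplem} to line up exactly — that is, choosing the right \(\delta\), reversing the summation index correctly, and verifying that the scalar prefactors \(\frac{1}{nw_n}\) recombine into \(\prod_{n=1}^m\big(\frac{q}{nw_n}+1\big)\) without slack that would break the claimed bound. The inequality \(w_n\le w_m\) is essential here and must be invoked carefully, since it is what lets a product with heterogeneous weights be dominated by one with the single rate \(w_m\) appropriate for Lemma \ref{lem:helplem}; the equality case \(\delta=0\) in that lemma corresponds to the isotropic alignment and suggests the estimate is tight when all weights coincide, which is a useful sanity check. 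I would finish by handling the base case \(m=1\), where \(X_{w_1}(q,1)=\{0,1,\ldots,\lfloor q/w_1\rfloor\}\) has cardinality \(\lfloor q/w_1\rfloor+1\le \frac{q}{w_1}+1\), matching the right-hand side for \(n=1\).
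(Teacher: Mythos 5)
Your overall strategy (induction on \(m\), slicing over \(\alpha_m=j\), and invoking Lemma \ref{lem:helplem} to collapse the sum over slices) is exactly the paper's, and your recursion and base case are correct. The gap is in the step you yourself flag as the main obstacle: the way you propose to use \(w_n\le w_m\) loses too much and the induction does not close. You bound \((q-jw_m)+nw_n\le(q-jw_m)+nw_m\) and pull out \(\prod_{n=1}^{m-1}\frac{w_m}{nw_n}\) (incidentally, your phrasing ``replacing \(w_n\) in the denominator by \(w_m\) only enlarges each factor'' is backwards --- it is the numerator you are enlarging). Carrying this through, Lemma \ref{lem:helplem} applied to \(\sum_{j'=0}^{L}\prod_{n=1}^{m-1}(n+\delta+j')\) with \(L=\lfloor q/w_m\rfloor\), \(\delta=q/w_m-L\) yields
\[
\# X_{\bf w}(q,m)\;\le\;\prod_{n=1}^{m-1}\frac{w_m}{nw_n}\cdot\frac{1}{m}\prod_{n=1}^{m}\Big(\frac{q}{w_m}+n\Big)\;=\;\prod_{n=1}^{m}\frac{q+nw_m}{nw_n},
\]
which dominates the target \(\prod_{n=1}^{m}\frac{q+nw_n}{nw_n}\) strictly whenever some \(w_n<w_m\). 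For instance with \(m=2\), \(w=(1,2)\), \(q=2\) your route gives \(6\) while \eqref{eq:estcardset1} claims \(4.5\) (the true count is \(4\)). Since the induction requires reproducing the exact form \(\prod_{n=1}^{m}(\frac{q}{nw_n}+1)\) at every stage, the argument breaks here.

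The fix, which is the paper's key manoeuvre, is to normalize before using the ordering of the weights: factor \(\prod_{n=1}^{m-1}(1+\frac{q}{nw_n})\) out of the sum first, so that each summand becomes \(\prod_{n=1}^{m-1}\big(1-\frac{jw_m}{nw_n+q}\big)\), and only then apply \(w_n\le w_m\) to get \(1-\frac{jw_m}{nw_n+q}\le 1-\frac{jw_m}{nw_m+q}\) (here the replacement genuinely enlarges each factor). Splitting off the \(j=0\) term, which equals \(1\), and applying Lemma \ref{lem:helplem} to the remaining sum shows it is at most \(\frac{q}{mw_m}\); the prefactor you already pulled out is the product over the first \(m-1\) coordinates, and the extra \(1+\frac{q}{mw_m}\) completes the induction with no slack. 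The monotonicity of the weights must act on the normalized factors \(1-\frac{jw_m}{nw_n+q}\), not on the raw factors \(\frac{q-jw_m}{nw_n}+1\).
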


\begin{proof}
The prove is performed by induction on \(m\). For \(m=1\), the assertion is obviously
fulfilled, since
\[
\# {X_{\bf w}(q,1)} =\sum_{\alpha_1=0}^{\lfloor
\frac{q}{w_1}\rfloor} 1=\Big\lfloor
\frac{q}{w_1}\Big\rfloor+1.
\]
Let us assume that \eqref{eq:estcardset1} is true for \(m-1\).
For \(m\in\mathbb{N}\), the cardinality of \(X_{\bf w}(q,m)\) can be calculated by
\[
\# {X_{\bf w}(q,m)} = \sum_{j=0}^{\lfloor\frac{q}{w_m}\rfloor}
\# {X_{{\bf w}_{1:m-1}}(q-jw_m,m-1)}.
\]
Inserting the induction hypothesis yields that
\begin{equation}\label{eq:mengerek}
\begin{aligned}
\# {X_{\bf w}(q,m)}&\le \sum_{j=0}^{\lfloor\frac{q}{w_m}\rfloor}
\prod_{n=1}^{m-1}\bigg(1+\frac{q-j w_m}{nw_n}\bigg)\\
&=\Bigg(\prod_{k=1}^{m-1} \bigg(1+\frac{q}{kw_k}\bigg)\Bigg)
\sum_{j=0}^{\lfloor\frac{q}{w_m}\rfloor}\prod_{n=1}^{m-1}
\frac{1+\frac{q-j w_m}{nw_n}}{1+\frac{q}{nw_n}}\\
&= \Bigg(\prod_{n=1}^{m-1} \bigg(1+\frac{q}{nw_n}\bigg)\Bigg)
\sum_{j=0}^{\lfloor\frac{q}{w_m}\rfloor}\prod_{n=1}^{m-1}
\bigg(1-\frac{jw_m}{nw_n+q}\bigg)\\
&= \Bigg(\prod_{n=1}^{m-1} \bigg(1+\frac{q}{nw_n}\bigg)\Bigg)
\Bigg(1+\sum_{j=1}^{\lfloor\frac{q}{w_m}\rfloor}\prod_{n=1}^{m-1}
\bigg(1-\frac{jw_m}{nw_n+q}\bigg)\bigg).
\end{aligned}
\end{equation}
Focusing on the last term and since \(w_m\ge w_n\) for all \(0\le n\le m\),
we conclude that
\begin{align*}
\sum_{j=1}^{\lfloor\frac{q}{w_m}\rfloor}\prod_{n=1}^{m-1}
\bigg(1-\frac{jw_m}{nw_n+q}\bigg)
&\le \sum_{j=1}^{\lfloor\frac{q}{w_m}\rfloor}\prod_{n=1}^{m-1}
\bigg(1-\frac{jw_m}{nw_m+q}\bigg)\\
& =  \prod_{n=1}^{m-1} \bigg(n+\frac{q}{w_m}\bigg)^{-1}
\sum_{j=1}^{\lfloor\frac{q}{w_m}\rfloor}\prod_{n=1}^{m-1}
\bigg(n+\frac{q}{w_m}-j\bigg).
\end{align*}
Applying the previous lemma with \(L= \lfloor \frac{q}{w_m}\rfloor\)
and \(\delta = \frac{q}{w_m}-L\) leads to
\[
\sum_{j=1}^{L}\prod_{n=1}^{m-1} (n+L+\delta-j)=
\sum_{j=0}^{L-1}\prod_{n=1}^{m-1} (n+\delta+j)\le
\frac{1}{m}\prod_{n=0}^{m-1}(L+\delta+n).
\]
Thus, we obtain that
\[
\sum_{j=1}^{\lfloor\frac{q}{w_m}\rfloor}\prod_{n=1}^{m-1}
\bigg(1-\frac{jw_m}{nw_n+q}\bigg)\bigg)
\le \frac{L+\delta}{m}=\frac{q}{mw_m}.
\]
Inserting this into \eqref{eq:mengerek} finishes the proof.
\end{proof}

\begin{remark}\label{rem:indexset}
\begin{enumerate}

\item
We would like to point out that estimate \eqref{eq:estcardset1}
is sharp in the isotropic case, that is, for the weight \({\bf w}={\bf 1}\).
Moreover, the ordering of the weight vector is crucial in this estimate.
There are examples where this estimate does not hold if the weights
are not in ascending order.

\item
At first glance one might claim that even
the estimate
\[
\# {X_{\bf w}(q,m)}\le \prod_{n=1}^m \frac{\big\lfloor
\frac{q}{w_n}\big\rfloor+n}{n}
\]
is valid. This is true in a lot of cases which we investigated. Nonetheless,
there are examples where this estimate fails.
\end{enumerate} 
\end{remark}

The novel estimate 
\begin{equation}\label{eq:novest}
\# {X_{\bf w}(q,m)}\le \prod_{n=1}^m \frac{\frac{ q}{w_n}+n}{n}\tag{SG Formula}
\end{equation}
is much more accurate than the well 
established and widely used formula by Beged-Dov, cf.~\cite{Beg72},
\begin{equation}\label{eq:Begdov}
\# {X_{\bf w}(q,m)}\le \prod_{n=1}^m \frac{q+\|{\bf w}\|_1}{nw_n} \tag{BD Formula}
\end{equation}
or the anisotropic tensor product estimate 
\begin{equation}\label{eq:anisotpest}
\# {X_{\bf w}(q,m)}\leq\prod_{n=1}^m\bigg(\bigg\lfloor\frac{q}{w_n}\bigg\rfloor+1\bigg)\tag{TP Formula},
\end{equation}
Indeed, the new estimate implies \eqref{eq:Begdov}. This can easily be shown by induction: For \(m=1\), both estimates coincide. The induction step \(m\mapsto m+1\) follows with \(\tilde{q} = q+\sum_{n=1}^m w_n\) according
to
\begin{align*}
\prod_{n=1}^{m+1} (q+\|{\bf w}\|_1) 
&= (\tilde{q}+w_{m+1})^{m+1} = \sum_{n=0}^{m+1} \binom{m+1}{n} \tilde{q}^nw_{m+1}^{m+1-n} \\
& \ge \tilde{q}^{m+1} + (m+1) \tilde{q}^m w_{m+1} \\
&\ge \big(q+(m+1) w_{m+1}\big) \tilde{q}^m\\
&\ge \big(q+(m+1) w_{m+1}\big) \prod_{n=1}^m (q+nw_n)= \prod_{n=1}^{m+1} (q+nw_n).
\end{align*}
A numerical comparison of the exact number of indices, of our new bound and 
of the formula of Beged-Dov can be found in Figure \ref{fig:PointEstimates} in the numerical examples. 

In practical applications, we will usually have to choose the level 
\(q\). Hence, it is also interesting to examine how the computational cost behave, under the 
decay Assumption \ref{ass:tauandgandh}, with respect to the dimension \(m\). 

\begin{lemma}\label{lem:estpointslog}
Let Assumption \ref{ass:tauandgandh} hold for some \(c>1\) and let \(m\ge 3\). 
Then, we obtain that the number of indices in the anisotropic sparse grid 
on level \(q\) is bounded by
\begin{equation}\label{eq:estalgdecindic}
\# {X_{\bf w}(q,m)}\le c(r) \exp\bigg(\frac{q}{r} \log(\log(m))\bigg) 
 = c(r) \log(m)^{q/r}
\end{equation}
with a constant \(c(r)\) which is independent of \(m\).
\end{lemma}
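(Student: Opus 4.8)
The plan is to combine the freshly proved \eqref{eq:novest} with the polynomial growth of \(\{\tau_n\}_n\) from Assumption \ref{ass:tauandgandh}. Starting from
\[
\# {X_{\bf w}(q,m)}\le \prod_{n=1}^m \bigg(\frac{q}{nw_n}+1\bigg),\qquad w_n=\log(\kappa_n),
\]
I would pass to logarithms and apply \(\log(1+x)\le x\) to reduce the claim to the additive estimate \(\sum_{n=1}^m \frac{q}{nw_n}\le \frac{q}{r}\log\log(m)+O(1)\), where the \(O(1)\) term must be shown to be independent of \(m\). Exponentiating then produces the asserted bound \(c(r)\log(m)^{q/r}\).

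The arithmetic heart of the argument is a scale-free lower bound on the weights. Since \(\kappa_n>1+\tau_n\ge 1+cn^r\) by the remark following \eqref{eq:assgaussquad} together with Assumption \ref{ass:tauandgandh}, the strengthened hypothesis \(c>1\) gives \(1+cn^r\ge n^r\) and hence \(\kappa_n>n^r\), so that
\[
w_n=\log(\kappa_n)\ge r\log(n),\qquad n\ge 2.
\]
This is precisely where \(c>1\) (rather than merely \(c>0\)) is needed. The index \(n=1\) must be peeled off separately, since there \(r\log(n)=0\) carries no information; instead I would bound that single factor crudely by \(\frac{q}{w_1}+1\le \frac{q}{\log(1+c)}+1\), which is an \(m\)-independent (though \(q\)-dependent) constant.

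It then remains to control \(\sum_{n=2}^m \frac{q}{nw_n}\le \frac{q}{r}\sum_{n=2}^m \frac{1}{n\log n}\). Because \(x\mapsto \frac{1}{x\log x}\) is decreasing on \((1,\infty)\), I would compare the tail of the sum with \(\int_2^m \frac{\d x}{x\log x}=\log\log(m)-\log\log(2)\), yielding \(\sum_{n=2}^m \frac{1}{n\log n}\le \log\log(m)+C_0\) with an absolute constant \(C_0=\frac{1}{2\log 2}-\log\log 2\); here the restriction \(m\ge 3\) ensures \(\log\log(m)>0\) so that the final expression \(\log(m)^{q/r}\) is meaningful. Collecting the \(n=1\) factor together with \(e^{qC_0/r}\) into a single constant \(c(r)\) depending on \(q,r,c\) but not on \(m\) finishes the proof.

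The only genuine subtleties are the isolated \(n=1\) term and the integral comparison that produces the double logarithm; everything else is the routine chain \(\log(1+x)\le x\) followed by exponentiation. The main conceptual point I would emphasise is that it is the borderline-divergent series \(\sum 1/(n\log n)\) that generates \(\log\log(m)\) instead of a power of \(m\), and this is exactly what converts the otherwise dimension-dependent product into a dimension-robust estimate.
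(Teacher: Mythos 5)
Your argument follows the same route as the paper's proof: start from Lemma \ref{conjecture}, use $w_n=\log(\kappa_n)>\log(1+cn^r)\ge r\log(n)$ (which is indeed exactly where $c>1$ enters), pass to logarithms via $\log(1+x)\le x$, and compare $\sum 1/(n\log n)$ with $\int \d x/(x\log x)$ to generate the double logarithm. All of these steps are correct as you carry them out. Where you fall short of the statement is the constant: your bound reads
\[
\# X_{\bf w}(q,m)\le \Big(\tfrac{q}{\log(1+c)}+1\Big)\,e^{qC_0/r}\,\log(m)^{q/r},
\qquad C_0=\tfrac{1}{2\log 2}-\log\log 2\approx 1.09>0,
\]
so the prefactor grows exponentially in $q$. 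You acknowledge this, but the notation $c(r)$ in \eqref{eq:estalgdecindic} is meant to signal a constant depending only on $r$ (and on the constant of Assumption \ref{ass:tauandgandh}), not on $q$, and the paper's proof does deliver that; an extra $e^{cq}$ would substantially dilute the value of the estimate as a cost bound when $q$ grows.

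The device you are missing is the choice of splitting point. The paper peels off the first \emph{three} factors and bounds $\sum_{n=4}^m \frac{q}{nw_n}\le\frac{q}{r}\int_3^m\frac{\d x}{x\log x}=\frac{q}{r}\big(\log\log m-\log\log 3\big)$. Since $\log\log 3>0$, the boundary term contributes a \emph{decaying} exponential $e^{-(q/r)\log\log 3}$, which leaves exactly enough room to absorb the peeled-off cubic polynomial in $q$ via $p(q)\le c(r)\,e^{(q/r)\log\log 3}$; the two exponentials cancel and the final constant is independent of both $m$ and $q$. Splitting at $n=1$ and integrating from $2$, as you do, the boundary term $-\log\log 2$ has the wrong sign, so there is nothing available to absorb even the linear-in-$q$ factor from the $n=1$ term, let alone $e^{qC_0/r}$. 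The fix is one line: move the split to $n\le 3$ versus $n\ge 4$ and start the integral at $3$; otherwise your proof establishes only the weaker statement with a $q$-dependent constant.
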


\begin{proof}
From Lemma \ref{conjecture}, we know that
\[
\# {X_{\bf w}(q,m)}\le \prod_{n=1}^m \bigg(\frac{q}{n w_n}+1\bigg).
\]
Next, we split the product into
\begin{equation}\label{eq:prodsplit}
\prod_{n=1}^m  \bigg(\frac{q}{n w_n}+1\bigg)=\bigg(\frac{q}{w_1}+1\bigg)\bigg(\frac{q}{2 w_2}+1\bigg)
\bigg(\frac{q}{3 w_3}+1\bigg)
\prod_{n=4}^m \bigg(\frac{q}{n w_n}+1\bigg).
\end{equation}
We estimate the last term by
\[
\prod_{n=4}^m \bigg(\frac{q}{n w_n}+1\bigg)=
\exp\bigg(\sum_{n=4}^m \log\bigg(\frac{q}{n w_n}+1\bigg)\bigg)
\le \exp\bigg(\sum_{n=4}^m \frac{q}{n w_n}\bigg).
\]
Due to $w_n\ge\log(n^{r})$, the sum in this
estimate can be bounded by the following integral:
\begin{align*}
\sum_{n=4}^m \frac{q}{n w_n}\le \int_{3}^m \frac{q}{x \log(x^r)} \d x &=\frac{q}{r} \int_{3}^{m}
\frac{1}{x \log(x)} \d x\\
 &=\frac{q}{r} \int_{\log(3)}^{\log(m)} \frac{1}{z} \d z
=\frac{q}{r} \big(\log(\log(m))-\log(\log(3))\big).
\end{align*}
The first three factors in \eqref{eq:prodsplit} define a
cubic polynomial in $q$ and can thus be estimated by the
exponential function according to
\[
\bigg(\frac{q}{w_1}+1\bigg)\bigg(\frac{q}{2 w_2}+1\bigg)
\bigg(\frac{q}{3 w_3}+1\bigg)
\le c(r) \exp\bigg(\frac{\log(\log(3))}{r} q\bigg).
\]
Hence, putting all together, we end up with
\begin{align*}
\prod_{n=1}^m  \bigg(\frac{q}{n w_n}+1\bigg)&\le c(r) \exp\bigg(\frac{\log(\log(3))}{r} q\bigg)
\exp\bigg(\frac{q}{r} \big(\log(\log(m))-\log(\log(3))\big)\bigg)\\
& \le c(r)
\exp\bigg(\frac{q}{r} \big(\log(\log(m))\bigg).
\end{align*}
\end{proof}

\begin{remark}
It follows immediately from Lemma \ref{lem:costhsumm} and the novel 
upper bound \eqref{eq:novest} that $X_{\bf w}(q,m)$ is bounded 
independently of $m$ whenever $\{(kw_k)^{-1}\}_k$ is summable. 
This result cannot be proven by \eqref{eq:Begdov}. Indeed, for 
$w_k=k^r$ with $r>0$, it holds that 
\begin{align*}
\frac{(q+\sum_{k=1}^m w_k)^m}{m! \prod_{k=1}^m w_k}&>1.1 \frac{(q+\sum_{k=1}^m k^r)^m}{(2\pi m)^{(r+1)/2}(m/e)^{m(r+1)}}
> 1.1 \frac{(q+m^{r+1}/(r+1))^m}{(2\pi m)^{(r+1)/2}(m/e)^{m(r+1)}}\\
&>1.1 \frac{m^{m(r+1)}e^{m(r+1)}}{(2\pi m)^{(r+1)/2}m^{m(r+1)}(r+1)^m}=1.1 \frac{1}{(2\pi m)^{(r+1)/2}}\Big(\frac{e^{r+1}}{r+1}\Big)^m. 
\end{align*}
Since \(f(x) =e^x/x\) is strictly increasing for $x>1$, we obtain
\[
\frac{e^{r+1}}{r+1}>e. 
\]
Hence, it follows that
\[
\frac{1}{(2\pi m)^{(r+1)/2}}\Big(\frac{e^{r+1}}{r+1}\Big)^m>\frac{\exp(m)}{(2\pi m)^{(r+1)/2}}
\]
which implies that \eqref{eq:Begdov} grows at least exponentially 
in $m$ for weights $w_k=k^r$ with $r>0$. 
\end{remark}

\subsection{Convergence in terms of the number of quadrature points}
The findings from the previous two sections can be summarized to an error 
estimate in terms of the sparse grid quadrature level \(q\), that is
\begin{equation}\label{eq:errestlevelq}
\big|\big({\bf I}-\mathcal{A}_{\bf w}(q,m)\big) f_m\big|
\le C_1(\delta) e^{-q(1-\delta)} \|f_m\|_{C(\boldsymbol{\Sigma}_m)}, 
\end{equation}
an error estimate in terms of the number of indices in the anisotropic sparse grid, 
\begin{equation}\label{eq:errestnumberind}
\big|\big({\bf I}-\mathcal{A}_{\bf w}(q,m)\big) f_m\big| 
\le C_2({\bf w}, \beta) \#  X_{\bf w}(q,m)^{-(\beta-1)},
\end{equation}
an estimate of the computational cost,
\begin{equation}\label{eq:finalcompl}
N(q)\isdef\text{cost}\big(\mathcal{A}_{\bf w}(q,m) \big)\le \#  X_{\bf w}(q,m)^2,
\end{equation}
and finally an improved estimate on the number of indices in an anisotropic sparse grid,
\[
 \#  X_{\bf w}(q,m)\le \prod_{n=1}^m \bigg(\frac{ q}{nw_n}+1\bigg). 
\]

With these estimates at hand, we are now able to conduct the main result of this section.
From \eqref{eq:errestnumberind} and \eqref{eq:finalcompl}, we immediately 
obtain that the error in terms of number of quadrature points is bounded 
independently of the dimension \(m\). 
\begin{theorem}\label{theo:finalest}
The error of the anisotropic sparse grid quadrature with \(w_n = \log(\kappa_n)\) 
can be bounded in terms of the number of quadrature points \(N(q)\) according to
\[
\big|\big({\bf I}-\mathcal{A}_{\bf w}(q,m)\big) f_m\big| 
\le c({\bf w}, \beta) N(q)^{-(\beta-1)/2} 
\] 
for all \(\beta < r\). 
\end{theorem}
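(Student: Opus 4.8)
The plan is to combine the two ingredients that were assembled immediately before the statement: the error bound in terms of the cardinality of the index set, \eqref{eq:errestnumberind}, and the cost bound \eqref{eq:finalcompl}. Both of these hold uniformly in the dimension $m$, so the resulting estimate will automatically inherit dimension-independence, which is the whole point of the construction.

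First I would recall from \eqref{eq:errestnumberind} that for every $\beta<r$ we have
\[
\big|\big({\bf I}-\mathcal{A}_{\bf w}(q,m)\big) f_m\big|
\le c({\bf w},\beta)\, \#  X_{\bf w}(q,m)^{-(\beta-1)},
\]
where the constant is bounded independently of $m$ by virtue of \eqref{eq:summability}. Next I would invoke the cost bound \eqref{eq:finalcompl}, namely $N(q)\le \# X_{\bf w}(q,m)^2$. Since both sides are positive, taking square roots shows this is equivalent to $\# X_{\bf w}(q,m)\ge N(q)^{1/2}$.

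The key step is then a monotonicity argument. Because $\beta>1$, the exponent $-(\beta-1)$ is strictly negative, so the map $t\mapsto t^{-(\beta-1)}$ is decreasing on the positive reals. Applying it to the lower bound $\# X_{\bf w}(q,m)\ge N(q)^{1/2}$ yields
\[
\#  X_{\bf w}(q,m)^{-(\beta-1)} \le N(q)^{-(\beta-1)/2}.
\]
Substituting this into the error bound above gives exactly the claimed estimate, with the same constant $c({\bf w},\beta)$ and the same range $\beta<r$.

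There is no genuine obstacle here: all the substantive work was done in establishing \eqref{eq:errestnumberind} (through Theorem~\ref{thm:GO} together with the decay Assumption~\ref{ass:tauandgandh}) and \eqref{eq:finalcompl} (through the downward-closedness argument and the sharp cardinality bound of Lemma~\ref{conjecture}). The only point that warrants a moment's care is that the inequality in the cost bound combines correctly with the \emph{negative} exponent; it is worth emphasizing that the restriction $\beta>1$ is needed simultaneously for the summability in \eqref{eq:summability} and for keeping the exponent negative, so the constraint $\beta<r$ propagates unchanged to the final statement.
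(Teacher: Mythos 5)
Your proposal is correct and follows exactly the paper's own (implicit) argument: the theorem is stated as an immediate consequence of combining the error bound \eqref{eq:errestnumberind} with the cost bound \eqref{eq:finalcompl} via $\# X_{\bf w}(q,m)\ge N(q)^{1/2}$ and the negativity of the exponent $-(\beta-1)$. Nothing further is needed.
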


\section{Numerical results}\label{sec:NumRes}
This section is dedicated to numerical results in order to 
illustrate the theoretical findings. We will consider three different examples:
At first, we consider a high dimensional quadrature problem, afterwards we have a look at
a parametric diffusion problem as they usually occur in the context of uncertainty quantification
and finally, we consider the approximation of quantities of interest from a diffusion problem 
on a random domain.\footnote{The implementation of the sparse grid quadrature is 
available on https://github.com/muchip/SPQR.}

\subsection{Pure quadrature problem}\label{subsec:ex1} As a first example, we consider the quadrature problem
\[
\int_{\Gamma^m}f_m({\bf y})2^{-m}\d{\bf y},
\]
where the function \(f_m\colon\Gamma^m\to\mathbb{R}\) is given by
\[
f_m({\bf y})\isdef\bigg(0.6+0.2\sum_{n=1}^m n^{-s}y_n\bigg)^{-1}\quad\text{for }s=2,3,4.
\]
The derivatives of this function read
\[
\partial_{\bf y}^\balpha f_m = |\balpha|!(-1)^{|\balpha|}\bgamma^\balpha f_m^{|\balpha|+1}\quad\text{with } \gamma_n\isdef0.2n^{-s}.
\]
Therefore, \(f_m\) satisfies Assumption~\ref{ass:AnaExtension} 
for \(\tau_n<5n^{s-1}\). The dimension is set to \(m=1000\) 
and the weight sequence is computed according to
\[
w_n = \log\kappa_n,\quad\text{where }\kappa_n = n^s + \sqrt{1+n^{2s}}.
\]
This means that we disregard the fact that the existence of the 
analytic extension of \(f_m\) can only be proven for 
\(\tau_n<5n^{s-1}\).

A reference solution is obtained by the anisotropic sparse grid 
quadrature on a higher level, featuring about \(10^6\) quadrature 
points. The corresponding values are denoted in Table~\ref{table:refsols}.
\begin{table}[htb]
\begin{tabular}{|c|c|c|}
\hline
\(s = 2\) & \(s = 3\) & \(s = 4\)\\
\hline
1.7393632457035437 & 1.7342253547471955 & 1.7331866232415222\\
\hline
\end{tabular}
\caption{\label{table:refsols}Reference solutions for the different choices of \(s\) in the first example.}
\end{table}
In order to validate these reference solutions, we have tested them against a quasi-Monte Carlo
quadrature based on the Halton sequence.

Next, we consider the convergence of the anisotropic sparse grid quadrature. 
Obviously, in the absence of any decay, a genuine 1000-dimensional problem 
would be computationally not feasible. Thus, in order to determine the 
inherent dimensionality for each choice of the parameter \(s\), we 
approximate the reference solution, which has been computed 
with \(m=1000\), by \(m=10,100,1000\). 

\begin{figure}[htb]
\begin{center}
\pgfplotsset{width=0.47\textwidth, height=0.48\textwidth}
\begin{tikzpicture}
\begin{loglogaxis}[grid, ymin= 0.5e-10, ymax = 1e-0, xmin = 1, xmax =1e6, ytick={0.1,0.01,0.001,0.0001,0.00001,0.000001,1e-7,1e-8,1e-9,1e-10},%
    legend style={legend pos=north east,font=\small}, ylabel={error}, xlabel ={$N$}]
\addplot[line width=0.7pt,color=gray,mark=o] table{./ResultsRev/Res210.txt};\addlegendentry{$m = 10$};
\addplot[line width=0.7pt,color=azure,mark=triangle] table{./ResultsRev/Res2100.txt};\addlegendentry{$m = 100$};
\addplot [line width=0.7pt, color=dunkelmint,mark=square] table{./ResultsRev/Res21000.txt};\addlegendentry{$m  = 1000$};
\addplot[line width=0.7pt, color=black,dashed] table[x index={0},y index={2}, y={create col/linear regression={y}}]{./ResultsRev/Res21000.txt};
\xdef\slope{\pgfplotstableregressiona}
\addlegendentry{$N^{\pgfmathprintnumber{\slope}}$};

\end{loglogaxis}
\end{tikzpicture}
\pgfplotsset{width=0.47\textwidth, height=0.48\textwidth}
\begin{tikzpicture}
\begin{semilogyaxis}[grid, ymin=1,xmin = 0, xmax = 21, xtick={1,5,10,15,20},ytick={10,1e2,1e3,1e4,1e5,1e6},%
    legend style={legend pos=north west,font=\small}, ylabel=points, xlabel ={$q$}]
\addplot[line width=1pt,color=gray,mark=o] table[x index=0,y index=1]{./ResultsRev/pkte2100.txt};\addlegendentry{$N(q)$};
\addplot[line width=1pt,color=azure,mark=triangle] table[x index=0,y index=2]{./ResultsRev/pkte2100.txt};\addlegendentry{\#$X_{\bf w}(q,m)$};
\addplot [line width=1pt, color=dunkelmint,mark=square] table[x index=0,y index=3]{./ResultsRev/pkte2100.txt};
\addlegendentry{SG Formula};
\addplot [line width=1pt, color=blue,mark=diamond] table[x index=0,y index=4]{./ResultsRev/pkte2100.txt};
\addlegendentry{$\log(m)^{q/s}$};
\end{semilogyaxis}
\end{tikzpicture}
\caption{\label{fig:QuadProb2}Convergence of the anisotropic sparse grid 
quadrature with respect to the different parameter dimensions for $s=2$ (left). 
Number of quadrature points, cardinality of the sparse index set and their estimates 
for dimension \(m=100\) (right).}
\end{center}
\end{figure}
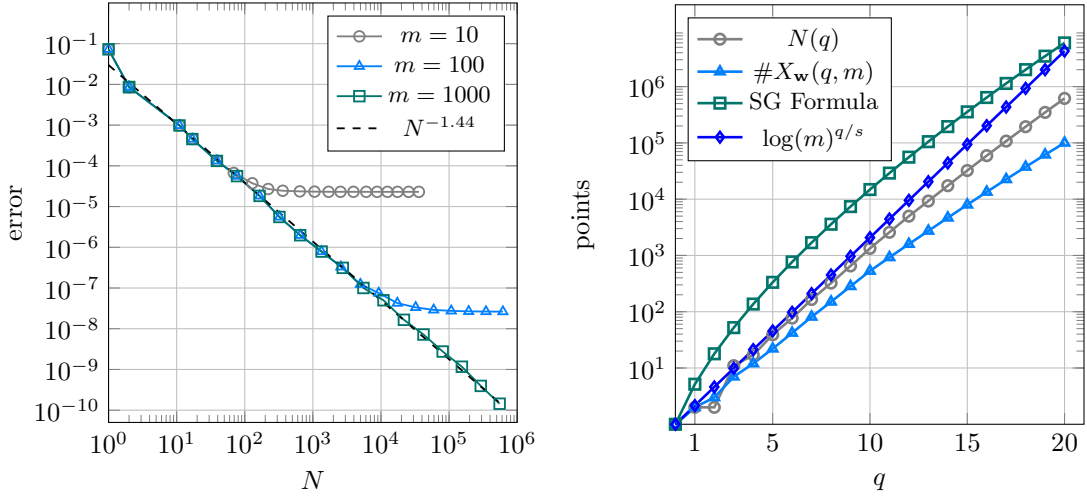

The plot on the left-hand side of Figure~\ref{fig:QuadProb2} shows
the convergence for \(s=2\). It turns out, that we are able to recover 
the reference solution up to an error of order \(10^{-5}\) by just 
considering \(m=10\) dimensions. For \(m=100\), we already achieve 
convergence up to an error of order \(10^{-8}\) and, finally, for \(m=1000\), 
we have convergence up to an error of order \(10^{-10}\). The observed 
convergence rate is in all cases superlinear. Hence, the observed rate is 
much better than expected by Theorem \ref{theo:finalest} since \((\beta-1)/2\) 
would be at most \(1/2\). 

The plot on the right-hand side of 
Figure~\ref{fig:QuadProb2} demonstrates that the bound on the number 
of quadrature points \(N(q)\le \# X_{\bf w}(q,m)^2\) is not sharp here. In fact, 
it seems that the number of quadrature points behaves more linearly in terms 
of the cardinality of the sparse grid index set. Indeed, for all the three different 
settings of this example (i.e., for $s=2,3,4$), the number of quadrature points lies 
between the cardinality of the sparse index set and the novel upper bound 
\eqref{eq:novest}. This is the reason why we observe convergence rates that 
are better reflected by \(\beta-1\) than by \((\beta-1)/2\). However, the novel upper 
bound is a tremendous improvement compared to \eqref{eq:Begdov}: Inserting in the 
weight vector ${\bf w}$ and the considered range of $q$, we obtain values between $9.8\times 10^{43}$ 
and $1.0\times10^{45}$ as an upper estimate for $\#X_{\bf w}(q,m)$ from \eqref{eq:Begdov}. Hence, we refrain 
from including this estimate in Figure~\ref{fig:QuadProb2} in order to illustrate the behavior of the 
remaining formulas more clearly. A graphical comparison of the different upper bounds from 
Section \ref{sec:sharp} is presented in is provided in Figure~\ref{fig:PointEstimates}.
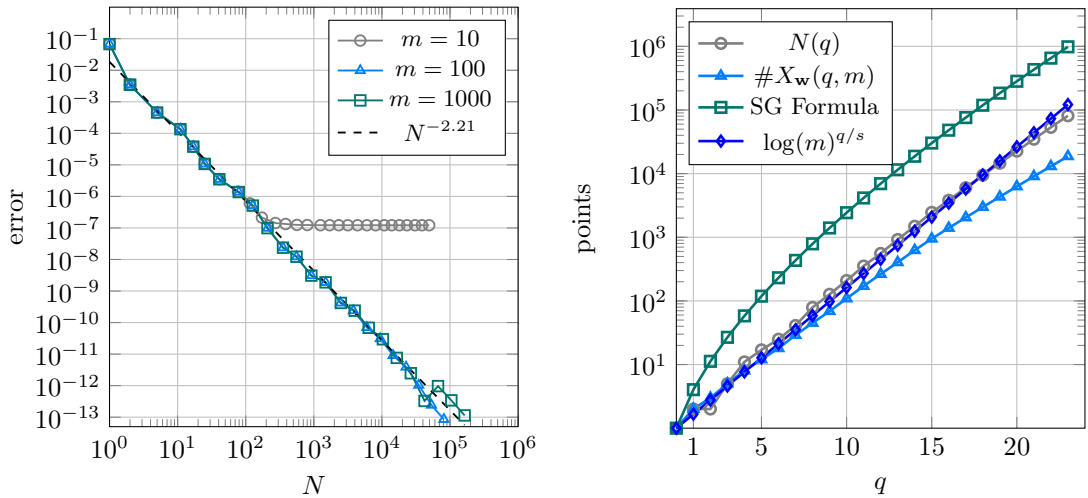
\begin{figure}[htb]
\begin{center}
\pgfplotsset{width=0.47\textwidth, height=0.48\textwidth}
\begin{tikzpicture}
\begin{loglogaxis}[grid, ymin= 0.5e-13, ymax = 1e-0, xmin = 1, xmax =1e6, ytick={0.1,0.01,0.001,0.0001,0.00001,0.000001,1e-7,1e-8,1e-9,1e-10,1e-11,1e-12,1e-13},%
    legend style={legend pos=north east,font=\small}, ylabel={error}, xlabel ={$N$}]
\addplot[line width=0.7pt,color=gray,mark=o] table{./ResultsRev/Res310.txt};\addlegendentry{$m = 10$};
\addplot[line width=0.7pt,color=azure,mark=triangle] table{./ResultsRev/Res3100.txt};\addlegendentry{$m = 100$};
\addplot [line width=0.7pt, color=dunkelmint,mark=square] table{./ResultsRev/Res31000.txt};\addlegendentry{$m  = 1000$};
\addplot[line width=0.7pt, color=black,dashed] table[x index={0},y index={2}, y={create col/linear regression={y}}]{./ResultsRev/Res31000.txt};
\xdef\slope{\pgfplotstableregressiona}
\addlegendentry{$N^{\pgfmathprintnumber{\slope}}$};
\end{loglogaxis}
\end{tikzpicture}
\pgfplotsset{width=0.47\textwidth, height=0.48\textwidth}
\begin{tikzpicture}
\begin{semilogyaxis}[grid, ymin=1,xmin = 0, xmax = 24, xtick={1,5,10,15,20},ytick={10,1e2,1e3,1e4,1e5,1e6},%
    legend style={legend pos=north west,font=\small}, ylabel=points, xlabel ={$q$}]
\addplot[line width=1pt,color=gray,mark=o] table[x index=0,y index=1]{./ResultsRev/pkte3100.txt};\addlegendentry{$N(q)$};
\addplot[line width=1pt,color=azure,mark=triangle] table[x index=0,y index=2]{./ResultsRev/pkte3100.txt};\addlegendentry{\#$X_{\bf w}(q,m)$};
\addplot [line width=1pt, color=dunkelmint,mark=square] table[x index=0,y index=3]{./ResultsRev/pkte3100.txt};
\addlegendentry{SG Formula};
\addplot [line width=1pt, color=blue,mark=diamond] table[x index=0,y index=4]{./ResultsRev/pkte3100.txt};
\addlegendentry{$\log(m)^{q/s}$};
\end{semilogyaxis}
\end{tikzpicture}

\caption{\label{fig:QuadProb3}Convergence of the anisotropic 
sparse grid quadrature with respect to different parameter dimensions for $s=3$ (left). 
Number of points of the quadrature, cardinality of the sparse index set and their estimates 
for dimension \(m=100\) (right).}
\end{center}
\end{figure}

Figure~\ref{fig:QuadProb3} depicts the situation for \(s=3\). Here for \(m=10\), we have 
already convergence up to an error of order \(10^{-7}\). The choices \(m=100,1000\)
both converge towards the reference solution up to an error of order \(10^{-13}\). For all 
choices of \(m\), we observe an order of convergence that is greater than 2. 
The right-hand side of the figure shows that the number of quadrature points \(N(q)\) 
behaves very similar to \(\log(m)^{q/s}\) and that \eqref{eq:novest} is, overestimating 
\(N(q)\) by a factor 10 for most of the considered values of \(q\). The behavior of 
\eqref{eq:Begdov} for $s=3$ is very similar to the case $s=2$ with values ranging 
from $1.7\times 10^{44}$ and $1.1\times 10^{45}$.

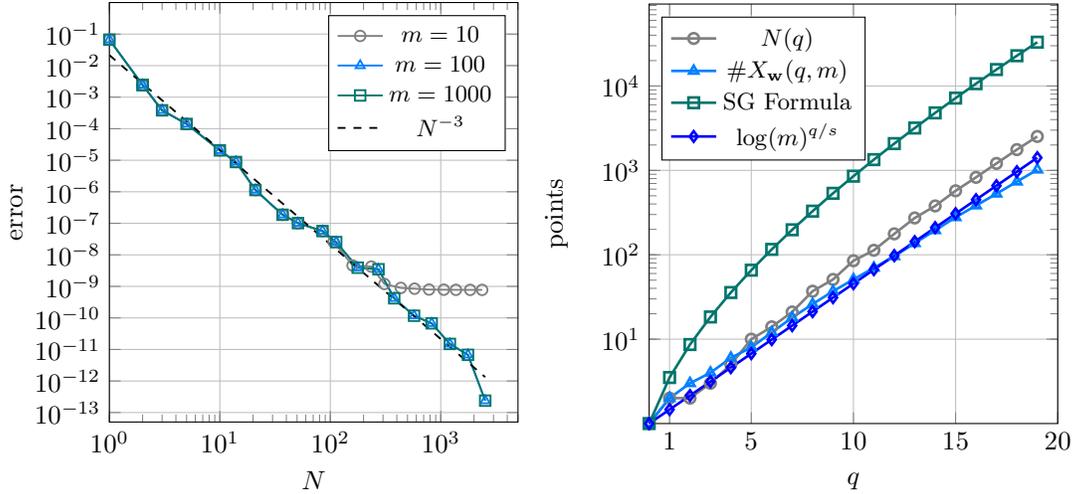
\begin{figure}[htb]
\begin{center}
\pgfplotsset{width=0.47\textwidth, height=0.48\textwidth}
\begin{tikzpicture}
\begin{loglogaxis}[grid, ymin= 0.5e-13, ymax = 1e-0, xmin = 1, xmax =5e3, ytick={0.1,0.01,0.001,0.0001,0.00001,0.000001,1e-7,1e-8,1e-9,1e-10,1e-11,1e-12,1e-13},%
    legend style={legend pos=north east,font=\small}, ylabel={error}, xlabel ={$N$}]
\addplot[line width=0.7pt,color=gray,mark=o] table{./ResultsRev/Res410.txt};\addlegendentry{$m = 10$};
\addplot[line width=0.7pt,color=azure,mark=triangle] table{./ResultsRev/Res4100.txt};\addlegendentry{$m = 100$};
\addplot [line width=0.7pt, color=dunkelmint,mark=square] table{./ResultsRev/Res41000.txt};\addlegendentry{$m  = 1000$};
\addplot[line width=0.7pt, color=black,dashed] table[x index={0},y index={2}, y={create col/linear regression={y}}]{./ResultsRev/Res41000.txt};
\xdef\slope{\pgfplotstableregressiona}
\addlegendentry{$N^{\pgfmathprintnumber{\slope}}$};
\end{loglogaxis}
\end{tikzpicture}
\pgfplotsset{width=0.47\textwidth, height=0.48\textwidth}
\begin{tikzpicture}
\begin{semilogyaxis}[grid, ymin=1,xmin = 0, xmax = 20, xtick={1,5,10,15,20},ytick={10,1e2,1e3,1e4,1e5,1e6},%
    legend style={legend pos=north west,font=\small}, ylabel=points, xlabel ={$q$}]
\addplot[line width=1pt,color=gray,mark=o] table[x index=0,y index=1]{./ResultsRev/pkte4100.txt};\addlegendentry{$N(q)$};
\addplot[line width=1pt,color=azure,mark=triangle] table[x index=0,y index=2]{./ResultsRev/pkte4100.txt};\addlegendentry{\#$X_{\bf w}(q,m)$};
\addplot [line width=1pt, color=dunkelmint,mark=square] table[x index=0,y index=3]{./ResultsRev/pkte4100.txt};
\addlegendentry{SG Formula};
\addplot [line width=1pt, color=blue,mark=diamond] table[x index=0,y index=4]{./ResultsRev/pkte4100.txt};
\addlegendentry{$\log(m)^{q/s}$};
\end{semilogyaxis}
\end{tikzpicture}

\caption{\label{fig:QuadProb4}Convergence of the anisotropic sparse 
grid quadrature with respect to different parameter dimensions for $s=4$ (left). 
Number of points of the quadrature, cardinality of the sparse index set 
and their estimates for dimension \(m=100\) (right).}
\end{center}
\end{figure}

Finally, on the left-hand side of Figure~\ref{fig:QuadProb4}, we see the 
convergence of the sparse grid quadrature for \(s=4\). Here, \(m=10\) already 
provides convergence up to an error of order \(10^{-9}\), whereas \(m=100,1000\) 
converge towards the reference solution up to an error of order \(10^{-13}\).
We observe an order of convergence that is about 3 for all choices of \(m\). 
On the right-hand side of Figure~\ref{fig:QuadProb4}, we see that the number 
of quadrature points \(N(q)\) is bounded by \(2\cdot\# X_{\bf w}(q,m)\) for 
the considered values of \(q\). 
Moreover, it seems that the simplified bound from Lemma \ref{lem:estpointslog}, 
with constant \(c(r)=1\), reflects the behaviour of \(\# X_{\bf w}(q,m)\) quite well.
As for \(s=2,3\), \eqref{eq:Begdov} heavily overestimates the number of 
points for \(s=4\) providing values ranging from 
$2.5\times 10^{44}$ and $8.0\times 10^{44}$.

The example indicates that the proven estimates are by 
far not sharp and could probably be improved further.
\subsection{Random diffusion problem} 

Let \((\Omega,\mathcal{F},\mathbb{P})\) be a complete
probability space and consider the diffusion equation
\begin{equation}\label{eq:eindimprob}
-\partial_x\big(a(x,\omega)\partial_xu(x,\omega)\big)=1\text{ in }D=(0,1)
	\quad\text{for almost every }\omega\in\Omega,
\end{equation}
where the random coefficient $a(x,\omega)$ is uniformly
bounded and elliptic meaning that
\[
0 < \underline{a}\isdef\essinf_{(x,\omega)\in D\times\Omega} a(x,\omega)
	\quad\text{and}\quad \overline{a}\isdef\esssup_{(x,\omega)\in D\times\Omega} a(x,\omega)<\infty.
\]
We arrive at a well-posed problem by complementing \eqref{eq:eindimprob}
with homogenous boundary conditions, i.e.~\(u(0,\omega)=u(1,\omega)=0\).

The first step towards the solution for this class of problems is the parameterization
of the stochastic parameter. To that end, one decomposes the diffusion coefficient
with the aid of the Karhunen-Lo\`eve expansion. Let
the covariance kernel of \(a(x,\omega)\in L^2\big(\Omega;L^2(D)\big)\) be defined by the positive semi-definite function
\[
\mathcal{C}(x,x')\isdef\int_\Omega \big(a({x},\omega)-\E[a](x)\big)\big(a({x'},\omega)-\E[a](x')\big)
\d\mathbb{P}(\omega).
\]
Herein, the integral with respect to \(\Omega\) has to be understood in terms of a Bochner integral, cf.~\cite{HP57}.
Now, let \((\lambda_k,\varphi_k)\) denote the eigenpairs obtained by solving the
eigenproblem for the diffusion coefficient's covariance, i.e.~
\[
\int_0^1\mathcal{C}(x,x')\varphi_k(x')\d x'=\lambda_k\varphi_k(x).
\]
Then, the Karhunen-Lo\`eve expansion of \(a(x,\omega)\) is given by
\[
a({x},\omega)
     =\E[a](x)+\sum_{n=1}^\infty\sqrt{\lambda_n}\varphi_n(x)X_n(\omega),
\]
where \(X_n\colon\Omega\to\Gamma\subset\mathbb{R}\) for \(n=1,2,\ldots\) are centered, pairwise uncorrelated and
\(L^2\)-normalized
random variables with \(X_n\sim\mathcal{U}([-\sqrt{3},\sqrt{3}])\). 
Note that the scaling factor \(\sqrt{3}\)
stems from the \(L^2\)-normalization of the random varibles. 
We have additionally to assume that the random variables
are independent. 

By substituting the random variables with their image, we arrive
in the uniformly distributed case at the parameterized
Karhunen-Lo\`eve expansion
\[
a({x},\boldsymbol\psi)
     = \E[a](x)+\sum_{n=1}^\infty\sqrt{\lambda_n}\varphi_n(x)\sqrt{3}\psi_n,
\]
where \(\psi_n\in\Gamma\). 
We define \(\gamma_n=\sqrt{\lambda_n}\|\varphi_n\|_{L^\infty(D)}\). The decay
of the sequence
\(\{\gamma_n\}_n\) is important in order to determine the region of analytical
extendability of the solution \(u\), cf.~Lemma \ref{lem:analgal}.

Truncating the respective Karhunen-Lo\`eve expansion after \(m\in\mathbb{N}\) terms,
yields the parametric and truncated diffusion problem
\begin{equation}\label{eq:prob1d}
-\partial_x\big(a_m(x,{\bf y})\partial_xu_m(x,{\bf y})\big)=1\text{ in }D=(0,1)\quad\text{for almost every }{\bf y}\in\Gamma^m.
\end{equation}
The impact of truncating the Karhunen-Lo\`eve expansion on the solution
is bounded by
\[
\|u-u_{m}\|_{L^2(\Gamma^\infty;H^1_0(D))}\leq c \|a-a_{m}\|_{L^2(\Gamma^\infty;L^\infty(D))}=\varepsilon(m),\quad c>0,
\]
where \(\varepsilon(m)\to 0\) montonically as \(m\to\infty\), see e.g.~\cite{Cha12,ST06}.
Since the \(L^2\)-norm is stronger than the \(L^1\)-norm, this particularly implies the
approximation estimate \eqref{eq:IntApprox} for \(u\) and \(u_m\), where the modulus
has to be replaced by the \(H^1_0(D)\)-norm.

Given the parametric solution \(u_m(x,{\bf y})\), we are interested in determining properties
of its distribution. In our numerical examples, we focus on the computation of the solution's
moments. These are given by the Bochner integral
\[
\mathcal{M}^p_{u_m}(x)\isdef\int_{\Gamma^m} u_m^p(x,{\bf y})2^{-m}\d{\bf y}.
\]
Especially, there holds \(\E[u_m](x)=\mathcal{M}^1_{u_m}(x)\). 

We remark that the sparse grid quadrature straightforwardly extends 
to Bochner integrals, see e.g.~\cite{HPS13c}. This is due to the fact, 
that the Bochner integral corresponds for almost every \(x\in(0,1)\) 
with the usual Lebesgue integral.

It remains to provide the related regularity results that allow for an analytic extension
of \(u_m\) into the complex plane.
The extendability is guaranteed by \cite[Corollary 2.1]{BNTT12},
which has slightly been modified to fit our purposes.
\begin{lemma}\label{lem:analgal}
The solution \(u_m\) to \eqref{eq:prob1d}
admits an analytic extension into the region
\(\boldsymbol{\Sigma}_m\) for
all \(\btau\) with
\[
\tau_k < \frac{\underline{a}}{C(\delta)k^{1+\delta}{\gamma}_k},\quad
\text{where}\ C({\delta})=\sum_{k=1}^\infty k^{-1-\delta}\text{ for
arbitrary \(\delta>0\).}
\]
Moreover, it holds that
\(
\|u_m\|_{C(\boldsymbol{\Sigma}_m;H_0^1(D))}
\) 
is bounded. 
\end{lemma}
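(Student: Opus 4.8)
The plan is to follow the strategy of \cite[Corollary 2.1]{BNTT12}: extend the affine coefficient to complex parameters, verify that the decay hypothesis on \(\btau\) keeps the extended coefficient uniformly elliptic, and then read off well-posedness and analyticity from a complex version of the Lax--Milgram lemma. First I would fix \({\bf z}\in\boldsymbol{\Sigma}_m\) and consider the complexified coefficient
\[
a_m(x,{\bf z}) = \E[a](x) + \sqrt{3}\sum_{n=1}^m \sqrt{\lambda_n}\,\varphi_n(x)\,z_n,
\]
which is affine in each variable \(z_n\) and hence entire in \({\bf z}\). Since \(z_n\in\Sigma_n\) lies within distance \(\tau_n\) of \(\Gamma\), I would write \(z_n=\zeta_n+w_n\) with \(\zeta_n\in\Gamma\) its nearest real point and \(|w_n|\le\tau_n\), so that \(a_m(x,{\bf z})=a_m(x,\boldsymbol\zeta)+\sqrt{3}\sum_{n=1}^m\sqrt{\lambda_n}\varphi_n(x)w_n\).

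The step I expect to be the main obstacle is the uniform lower bound on \(\operatorname{Re} a_m(x,{\bf z})\). The first term \(a_m(x,\boldsymbol\zeta)\) is a genuine real coefficient and is therefore bounded below by \(\underline{a}\); the second term is the complex perturbation, whose modulus is controlled by \(|w_n|\le\tau_n\), the definition \(\gamma_n=\sqrt{\lambda_n}\|\varphi_n\|_{L^\infty(D)}\) and the hypothesis \(\tau_k\gamma_k<\underline{a}/(C(\delta)k^{1+\delta})\). Summing and using \(\sum_{n=1}^\infty n^{-1-\delta}=C(\delta)\) gives
\[
\sum_{n=1}^m \gamma_n\tau_n < \frac{\underline{a}}{C(\delta)}\sum_{n=1}^m n^{-1-\delta}\le\underline{a},
\]
so that, up to the fixed normalization factor, the perturbation is dominated by \(\underline{a}\) and \(\operatorname{Re} a_m(x,{\bf z})\ge c_0>0\) uniformly in \(x\) and in \({\bf z}\in\boldsymbol{\Sigma}_m\). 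Together with the analogous uniform upper bound \(|a_m(x,{\bf z})|\le M\), the sesquilinear form \(b_{\bf z}(u,v)=\int_D a_m(x,{\bf z})\,u'\,\overline{v'}\,\d x\) is bounded and coercive on \(H_0^1(D)\) with constants independent of \({\bf z}\). The complex Lax--Milgram lemma then yields a unique \(u_m(\cdot,{\bf z})\in H_0^1(D)\) solving the weak form, satisfying \(\|u_m(\cdot,{\bf z})\|_{H_0^1(D)}\le C_P/c_0\), which is exactly the asserted uniform bound on \(\|u_m\|_{C(\boldsymbol{\Sigma}_m;H_0^1(D))}\).

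It remains to upgrade well-posedness to analyticity. Since the coefficient is affine in each \(z_n\), I would differentiate the weak form formally with respect to \(z_n\); the candidate derivative \(\partial_{z_n}u_m\) solves \(b_{\bf z}(\partial_{z_n}u_m,v)=-\sqrt{3}\sqrt{\lambda_n}\int_D\varphi_n\,u_m'\,\overline{v'}\,\d x\), whose right-hand side is a bounded antilinear functional, so coercivity provides a unique solution with a uniform bound. Verifying that the difference quotients converge to this solution shows that \({\bf z}\mapsto u_m(\cdot,{\bf z})\) is holomorphic in each variable separately; combined with the local uniform bound from the previous paragraph, Hartogs' theorem gives joint analyticity on \(\boldsymbol{\Sigma}_m\). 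This completes the extension, and since the uniform bound already established is the second claim, the lemma follows.
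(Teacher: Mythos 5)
The paper does not actually prove this lemma; it imports it from \cite[Corollary 2.1]{BNTT12} with the remark that the statement ``has slightly been modified,'' so there is no internal proof to compare against. Your argument is the standard one underlying that cited result --- complexify the affine coefficient, check a uniform lower bound on \(\operatorname{Re}a_m\), invoke the complex Lax--Milgram lemma, and obtain holomorphy by differentiating the variational form and passing difference quotients to the limit --- and its overall structure is sound. Two small remarks: Hartogs' theorem is heavier than needed once you have separate holomorphy together with the uniform bound, and for the \(H_0^1(D)\)-valued map one should invoke the equivalence of weak and strong holomorphy for Banach-space-valued functions; both are routine.

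The one place where your bookkeeping does not close is the normalization factor you wave away. With the paper's definition \(\gamma_n=\sqrt{\lambda_n}\|\varphi_n\|_{L^\infty(D)}\), the parametrized coefficient carries an extra \(\sqrt{3}\), so your perturbation estimate reads
\[
\operatorname{Re}a_m(x,{\bf z})\;\ge\;\underline{a}-\sqrt{3}\sum_{n=1}^m\gamma_n\tau_n\;>\;\underline{a}-\sqrt{3}\,\underline{a},
\]
and the right-hand side is negative. The strict inequality \(\tau_k<\underline{a}/(C(\delta)k^{1+\delta}\gamma_k)\) only buys an arbitrarily small margin, not a factor of \(\sqrt{3}\), so coercivity is not established under the hypothesis exactly as stated. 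This is easily repaired --- either require \(\tau_k<\underline{a}/(\sqrt{3}\,C(\delta)k^{1+\delta}\gamma_k)\) or absorb the \(\sqrt{3}\) into \(\gamma_k\) --- and arguably reflects a loose constant in the lemma statement itself rather than a flaw in your method, but as written the chain of inequalities does not yield \(\operatorname{Re}a_m\ge c_0>0\). You should make the constant explicit rather than appealing to ``up to the fixed normalization factor.''
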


Lemma \ref{lem:analgal} characterizes the region of analyticity and, therefore, 
Assumption~\ref{ass:AnaExtension} for this application. 
The next lemma from \cite{BNT07} guarantees that \(u_m\colon\Gamma^m\to H_0^1(D)\) also satisfies
a one-dimensional error estimate for the polynomial approximation which gives rise to an
estimate that is similar to \eqref{eq:assgaussquad}.

\begin{lemma}\label{lem:uniquaderr}
Let \(\mathcal{X}\) be a Banach space. Suppose that \(v\in C(\Gamma;\mathcal{X})\) admits
an analytic extension into \(\Sigma_1\).
Then, the error of the best approximation by polynomials of degree
at most \(n\) can be bounded by
\begin{equation}\label{eq:onedimerruni}
\inf_{w\in\Pi_{n}\otimes \mathcal{X}}
\|v-w\|_{C(\Gamma;\mathcal{X})}\le
\frac{2}{\kappa_1-1} e^{-n \log(\kappa_1)}
\|v\|_{C(\Sigma_1;\mathcal{X})}
\end{equation}
with \(\kappa_1=\tau_1+\sqrt{1+\tau_1^2}\).
\end{lemma}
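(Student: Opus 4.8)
The plan is to establish this as the Banach-space-valued analogue of the classical Bernstein best-approximation theorem for functions analytic in a Bernstein ellipse, and to transfer the scalar estimate to the vector-valued setting by duality. Two ingredients are needed: a geometric containment of the relevant Bernstein ellipse inside the region \(\Sigma_1\), and a Hahn--Banach reduction to the scalar case.

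First, I would fix the Bernstein ellipse. Writing the Joukowski map \(z=\tfrac12(\omega+\omega^{-1})\), let \(\mathcal{E}_{\kappa_1}\) denote the closed ellipse traced by \(|\omega|=\kappa_1\), i.e.\ with foci \(\pm1\) and semi-axes \(\tfrac12(\kappa_1+\kappa_1^{-1})\) and \(\tfrac12(\kappa_1-\kappa_1^{-1})\). Since \(\kappa_1=\tau_1+\sqrt{1+\tau_1^2}\) satisfies \(\kappa_1^{-1}=\sqrt{1+\tau_1^2}-\tau_1\), one computes \(\tfrac12(\kappa_1+\kappa_1^{-1})=\sqrt{1+\tau_1^2}\) and \(\tfrac12(\kappa_1-\kappa_1^{-1})=\tau_1\), so the semi-minor axis of \(\mathcal{E}_{\kappa_1}\) is exactly \(\tau_1\). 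The key geometric claim is that \(\mathcal{E}_{\kappa_1}\subseteq\Sigma_1=\Sigma(\Gamma,\tau_1)\). Since the filled ellipse is convex and \(z\mapsto\dist(z,\Gamma)\) is convex, it suffices to check that every boundary point \(z=\sqrt{1+\tau_1^2}\cos\theta+i\,\tau_1\sin\theta\) lies within distance \(\tau_1\) of \(\Gamma\). For \(|\operatorname{Re}z|\le1\) this is immediate because \(\dist(z,\Gamma)=|\operatorname{Im}z|\le\tau_1\); for \(\operatorname{Re}z>1\) a short computation reduces the claim \(|z-1|^2\le\tau_1^2\) to \(\cos^2\theta-2\sqrt{1+\tau_1^2}\cos\theta+1\le0\), whose roots are precisely \(\kappa_1^{\pm1}\), so the inequality holds on the admissible range of \(\cos\theta\); the case \(\operatorname{Re}z<-1\) is symmetric.

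With this containment in hand, I would reduce to the scalar theory. For any functional \(x^*\in\mathcal{X}^*\) with \(\|x^*\|\le1\), the map \(g\isdef x^*\circ v\) is a scalar function that is continuous on \(\Gamma\) and analytic in \(\Sigma_1\supseteq\mathcal{E}_{\kappa_1}\), with \(\max_{z\in\mathcal{E}_{\kappa_1}}|g(z)|\le\|v\|_{C(\Sigma_1;\mathcal{X})}\). The classical Bernstein estimate for functions analytic in a Bernstein ellipse (cf.\ \cite[Chapter 7.8]{DL93}) bounds the \(k\)-th Chebyshev coefficient of \(g\) by \(2\kappa_1^{-k}\max_{\mathcal{E}_{\kappa_1}}|g|\), whence the degree-\(n\) Chebyshev truncation \(g_n\) obeys \(\|g-g_n\|_{C(\Gamma)}\le\frac{2}{\kappa_1-1}\kappa_1^{-n}\|v\|_{C(\Sigma_1;\mathcal{X})}\). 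Now define the \(\mathcal{X}\)-valued polynomial \(w\isdef\sum_{k=0}^n a_kT_k\), where the \(a_k\in\mathcal{X}\) are the Chebyshev coefficients of \(v\), defined as Bochner integrals (legitimate since \(v|_\Gamma\) is continuous). Because \(x^*\) commutes with the Bochner integral, \(x^*\circ w=g_n\), so for each fixed \(x\in\Gamma\) we have \(\|(v-w)(x)\|_{\mathcal{X}}=\sup_{\|x^*\|\le1}|(g-g_n)(x)|\le\frac{2}{\kappa_1-1}\kappa_1^{-n}\|v\|_{C(\Sigma_1;\mathcal{X})}\). Taking the supremum over \(x\in\Gamma\) yields \(\|v-w\|_{C(\Gamma;\mathcal{X})}\le\frac{2}{\kappa_1-1}e^{-n\log\kappa_1}\|v\|_{C(\Sigma_1;\mathcal{X})}\), and since \(w\in\Pi_n\otimes\mathcal{X}\) this bounds the infimum, proving \eqref{eq:onedimerruni}.

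The main obstacle is making the vector-valued Chebyshev/Cauchy theory rigorous: one must know that weak analyticity (analyticity of \(x^*\circ v\) for every \(x^*\)) of the given extension of \(v\) is equivalent to strong analyticity, so that the \(\mathcal{X}\)-valued coefficients \(a_k\) inherit the same geometric decay uniformly and the contour-deformation argument producing the factor \(\kappa_1^{-k}\) remains valid in \(\mathcal{X}\); this is precisely where the Hahn--Banach reduction does the real work, since it lets every scalar estimate be applied to \(x^*\circ v\) and then recombined via \(\|y\|_{\mathcal{X}}=\sup_{\|x^*\|\le1}|x^*(y)|\). The geometric containment, by contrast, is a routine but indispensable computation that pins down why the constant involves exactly \(\kappa_1=\tau_1+\sqrt{1+\tau_1^2}\).
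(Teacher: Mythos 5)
Your proof is correct. Note that the paper itself does not prove this lemma---it imports it verbatim from \cite{BNT07}---and the argument given there is precisely the one you reconstruct: the scalar Bernstein/Chebyshev estimate on the ellipse \(\mathcal{E}_{\kappa_1}\) (via \cite[Chapter~7]{DL93}) transferred to the Banach-space setting by applying functionals \(x^*\) and recombining with \(\|y\|_{\mathcal{X}}=\sup_{\|x^*\|\le1}|x^*(y)|\). Your verification that the Bernstein ellipse with parameter \(\kappa_1=\tau_1+\sqrt{1+\tau_1^2}\) is exactly the largest one contained in \(\Sigma_1=\{z:\dist(z,\Gamma)\le\tau_1\}\) is the one computation the cited sources leave implicit, and you carry it out correctly.
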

Thus, in view of \eqref{eq:polappr}, we end up with the estimate 
\begin{equation*}
\big\|(\intI^{(n)}-\intQ^{(n)}) u_m({\bf y}_n^\star)\big\|_{H^1_0(D)}
\leq c(\kappa_n) \exp\big(-\log(\kappa_n) (2N-1)\big)\|u_m({\bf y}_n^\star)\|_{C(\Sigma_n;H^1_0(D)}. 
\end{equation*}
More generally we remark that by simply replacing all absolute values by the norms 
of the underlying Banach space \(\mathcal{X}\), 
the whole analysis we have presented so far for real valued functions directly transfers 
to Banach space valued functions \(f_m\colon\Gamma^m\to\mathcal{X}\) which fulfill the estimate
\begin{equation*}
\big\|(\intI^{(n)}-\intQ^{(n)}) f_m({\bf y}_n^\star)\big\|_{\mathcal{X}}
\leq c(\kappa_n) \exp\big(-\log(\kappa_n) (2N-1)\big)\|f_m({\bf y}_n^\star)\|_{C(\Sigma_n;\mathcal{X})}. 
\end{equation*}
\begin{figure}[htb]
\begin{center}
\pgfplotsset{width=0.49\textwidth, height=0.49\textwidth}
\begin{tikzpicture}
\begin{loglogaxis}[grid, ymin= 1e-8, ymax = 1e-0, xmin = 1, xmax =2e5, ytick={0.1,0.01,0.001,0.0001,0.00001,0.000001,1e-7,1e-8,1e-9,1e-10,1e-11,1e-12,1e-13},%
    legend style={legend pos=north east,font=\small}, ylabel={\(H^1\)-error}, xlabel ={$N$}]
\addplot[line width=0.7pt,color=gray,mark=o] table[x index=0,y index=1]{./ResultsRev/convergence52.txt};\addlegendentry{$\mathbb{E}_u$};
\addplot[line width=0.7pt,color=azure,mark=triangle] table[x index=0,y index=2]{./ResultsRev/convergence52.txt};\addlegendentry{$\mathcal{M}_u^2$};
\addplot [line width=0.7pt, color=dunkelmint,mark=square] table[x index=0,y index=3]{./ResultsRev/convergence52.txt};\addlegendentry{$\mathcal{M}_u^3$};
\addplot [line width=0.7pt, color=blue,mark=diamond] table[x index=0,y index=4]{./ResultsRev/convergence52.txt};\addlegendentry{$\mathcal{M}_u^4$};
\addplot [line width=0.7pt, color=black,dashed] table[x index={0}, y expr={0.8*x^(-1.13)}]{./ResultsRev/convergence52.txt};
\addplot [line width=0.7pt, color=black,dashed] table[x index={0}, y expr={0.008*x^(-1.13)}]{./ResultsRev/convergence52.txt};\addlegendentry{$N^{-1.13}$};
\end{loglogaxis}
\end{tikzpicture}
\pgfplotsset{width=0.49\textwidth, height=0.49\textwidth}
\begin{tikzpicture}
\begin{loglogaxis}[grid, ymin= 1e-9, ymax = 1e-0, xmin = 1, xmax =2e5, ytick={0.1,0.01,0.001,0.0001,0.00001,0.000001,1e-7,1e-8,1e-9,1e-10,1e-11,1e-12,1e-13},%
    legend style={legend pos=north east,font=\small}, ylabel={\(H^1\)-error}, xlabel ={$N$}]
\addplot[line width=0.7pt,color=gray,mark=o] table[x index=0,y index=1]{./ResultsRev/convergence72.txt};\addlegendentry{$\mathbb{E}_u$};
\addplot[line width=0.7pt,color=azure,mark=triangle] table[x index=0,y index=2]{./ResultsRev/convergence72.txt};\addlegendentry{$\mathcal{M}_u^2$};
\addplot [line width=0.7pt, color=dunkelmint,mark=square] table[x index=0,y index=3]{./ResultsRev/convergence72.txt};\addlegendentry{$\mathcal{M}_u^3$};
\addplot [line width=0.7pt, color=blue,mark=diamond] table[x index=0,y index=4]{./ResultsRev/convergence72.txt};\addlegendentry{$\mathcal{M}_u^4$};
\addplot [line width=0.7pt, color=black,dashed] table[x index={0}, y expr={1*x^(-1.29)}]{./ResultsRev/convergence52.txt};
\addplot [line width=0.7pt, color=black,dashed] table[x index={0}, y expr={0.008*x^(-1.29)}]{./ResultsRev/convergence52.txt};\addlegendentry{$N^{-1.29}$};
\end{loglogaxis}
\end{tikzpicture}
\caption{\label{fig:Mat72conv}Convergence of the anisotropic sparse grid 
quadrature for the random diffusion problem (\(\nu=5/2\) left, \(\nu=7/2\) right).}
\end{center}
\end{figure}
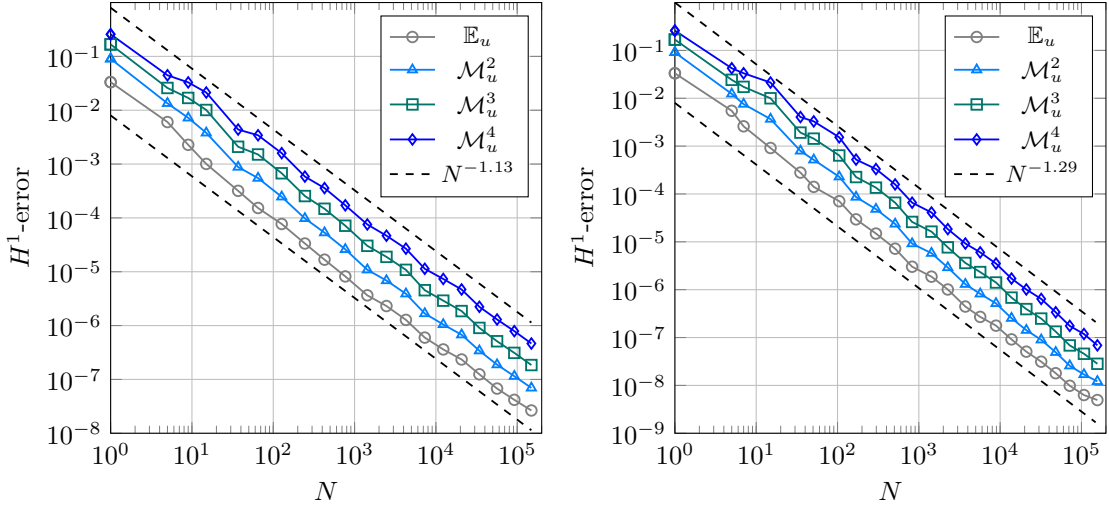
\begin{figure}[htb]
\begin{center}
\pgfplotsset{width=0.47\textwidth, height=0.48\textwidth}
\begin{tikzpicture}
\begin{semilogyaxis}[grid, ymin=1,xmin = 1, xmax = 40, xtick={1,10,20,30,40},ytick={10,1e10,1e20,1e30,1e40,1e40,1e50},%
    legend style={legend pos=north west,font=\small}, ylabel=points, xlabel ={$q$}]
\addplot[line width=1pt,color=gray,mark=o] table[x index=0,y index=1]{./ResultsRev/pointsMat52.txt};\addlegendentry{\#$X_{\bf w}(q,m)$};
\addplot[line width=1pt,color=azure,mark=triangle] table[x index=0,y index=2]{./ResultsRev/pointsMat52.txt};\addlegendentry{SG Formula};
\addplot [line width=1pt, color=dunkelmint,mark=square] table[x index=0,y index=3]{./ResultsRev/pointsMat52.txt};\addlegendentry{BD Formula};
\addplot [line width=1pt, color=blue,mark=diamond] table[x index=0,y index=4]{./ResultsRev/pointsMat52.txt};\addlegendentry{TP Formula};
\end{semilogyaxis}
\end{tikzpicture}
\pgfplotsset{width=0.47\textwidth, height=0.48\textwidth}
\begin{tikzpicture}
\begin{semilogyaxis}[grid, ymin=1,xmin = 1, xmax = 40, xtick={1,10,20,30,40}, ytick={10,1e5,1e10,1e15,1e20,1e25,1e30},%
    legend style={legend pos=north west,font=\small}, ylabel=points, xlabel ={$q$}]
\addplot[line width=1pt,color=gray,mark=o] table[x index=0,y index=1]{./ResultsRev/pointsMat72.txt};\addlegendentry{\#$X_{\bf w}(q,m)$};
\addplot[line width=1pt,color=azure,mark=triangle] table[x index=0,y index=2]{./ResultsRev/pointsMat72.txt};\addlegendentry{SG Formula};
\addplot [line width=1pt, color=dunkelmint,mark=square] table[x index=0,y index=3]{./ResultsRev/pointsMat72.txt};\addlegendentry{BD Formula};
\addplot [line width=1pt, color=blue,mark=diamond] table[x index=0,y index=4]{./ResultsRev/pointsMat72.txt};\addlegendentry{TP Formula};
\end{semilogyaxis}
\end{tikzpicture}
\caption{\label{fig:PointEstimates}Estimated number of points from the different formulas (\(\nu=5/2\) left, \(\nu=7/2\) right).}
\end{center}
\end{figure}
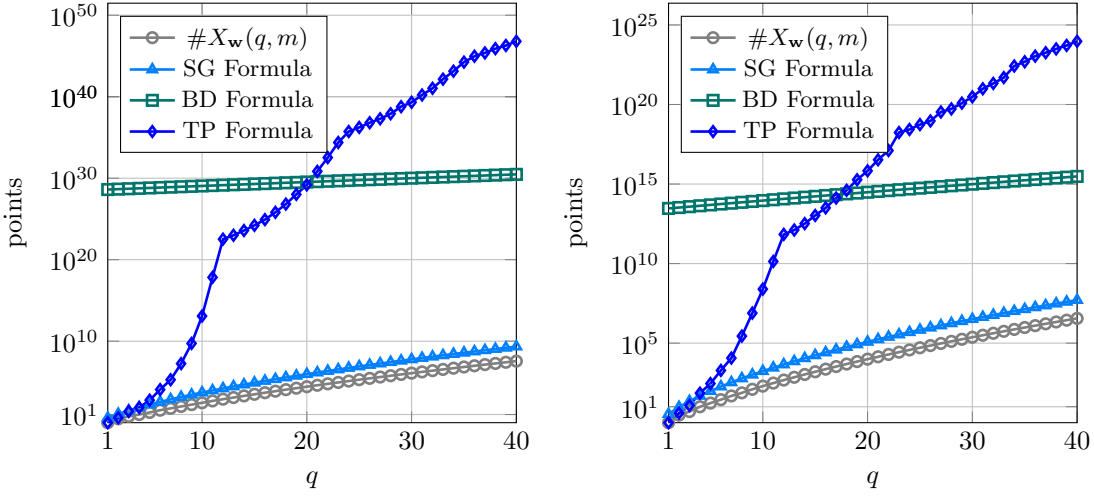

For this example, we employ two covariance kernels of
the Mat\'ern class for \(\nu=5/2\) and \(\nu=7/2\), cf.~\cite{Mat86}, i.e.
\[
\mathcal{C}_{5/2}(\rho)\isdef\frac 1 4\bigg(1 +\frac{\sqrt{5}\rho}{\ell} + \frac{5\rho^2}{3\ell^2}\bigg)\exp\bigg(-\frac{\sqrt{5}\rho}{\ell}\bigg)
\]
and
\[
\mathcal{C}_{7/2}(\rho)\isdef\frac 1 4\bigg(1 +\frac{\sqrt{7}\rho}{\ell} + \frac{14\rho^2}{5\ell^2}+\frac{49\sqrt{7}\rho^3}{15\ell^2}\bigg)\exp\bigg(-\frac{\sqrt{7}\rho}{\ell}\bigg),
\]
where \(\rho\isdef|x-x'|\). The correlation length is in both cases set to \(\ell=1/2\).
The spatial discretization is performed with piecewise linear finite elements an
a mesh with mesh size \(h=2^{-14}\), which results from \(16384\) equidistant
sub-intervals and is good a tradeoff between accuracy and computational time. A numerical approximation to the
Karhunen-Lo{\`e}ve expansion is computed by the pivoted Cholesky
decomposition of the covariance operator with a trace error of \(\varepsilon=2^{-28}\).
This yields an approximation error of the underlying random
field of \(\varepsilon=2^{-14}\), see \cite{HPS14a} for the details. Thus, the finite element error and the 
truncation error for the Karhunen-Lo\`eve expansion are balanced. The related truncation 
rank is given by  \(m=64\) for \(\mathcal{C}_{5/2}\) and  \(m=30\) for \(\mathcal{C}_{7/2}\). 
In addition, we set \(\E[a](x)=2.5\).
From \cite[Corollary 5]{GKN+13}, we know that \(\gamma_n\le c n^{-3}\) for \(\mathcal{C}_{5/2}\) and
\(\gamma_n\le c n^{-4}\) for \(\mathcal{C}_{7/2}\).

Since the solution of \eqref{eq:eindimprob} is not known analytically, we have again to
provide a reference solution. It is computed with \(h=\varepsilon=2^{-14}\), 
as well. Hence, we take here only the quadrature error into account.
The quadrature error is measured relative to the norm of the reference solution.
This reference solution is computed by the quasi-Monte Carlo quadrature
with Halton points and \(N=2^{30}\approx 10^9\) samples.

For the anisotropic sparse grid quadrature, we choose the weights \(w_n\) according 
to \(w_n=\log(\kappa_n)\) with \(\tau_n=1/\gamma_n\). This would be the correct 
setting for a corresponding anisotropic tensor product quadrature.
Hence, our anisotropic sparse grid quadrature is essentially
a sparsification of the anisotropic tensor product quadrature, cf.~\cite{HPS13d}
for more details on the anisotropic tensor product quadrature.
To choose the same quantity \(\tau_n\) for the region of analyticity as for the tensor
product quadrature seems to be a violation of Lemma \ref{lem:analgal}.
Indeed, the assertion of this lemma is that the quantities \(\tau_n\), which describe
the region of analytic extendability in each direction \(\Sigma_n\),
should be rescaled to \(\tilde{\tau}_n=\tau_n/(c(\delta)n^{1+\delta})\) in order to ensure
analytic extendability into the tensor product domain \(\boldsymbol{\Sigma}(\tilde{\btau})\).
Nonetheless, our experience suggests that the sparsification of the
anisotropic tensor product quadrature yields an error which is nearly as good
as the error of the anisotropic tensor product quadrature itself.

For both cases \(\nu=5/2\) and \(\nu=7/2\), it turns out that we obtain similar convergence rates for all moments
up to order four. The smoothness of the underlying covariance kernel has only little influence
on the rate of convergence here. This might be caused by the fact that the eigenvalues in 
the Karhunen-Lo\`eve expansion do not strictly decay in contrast to the previous example, but 
have some offset before the asymptotical rate is achieved. This is due to the small correlation 
length \(\ell=1/2\).

In addition to the convergence studies for the sparse anisotropic quadrature,
we also provide results on the estimated number of indices contained in the
sparse index set. To that end, we compare the tensor product estimate \eqref{eq:anisotpest}
and the formula by Beged-Dov \eqref{eq:Begdov} with
the novel estimate proposed in this article \eqref{eq:novest}. 
It turns that the new estimate fits the cardinality of the sparse grid index set 
very well in comparison to the other estimates, cp.\ Figure~\ref{fig:PointEstimates}.

\subsection{Laplace equation on a random domain}
\begin{figure}[htb]
\begin{center}
\includegraphics[scale=.5, clip,trim= 50 40 35 30]{./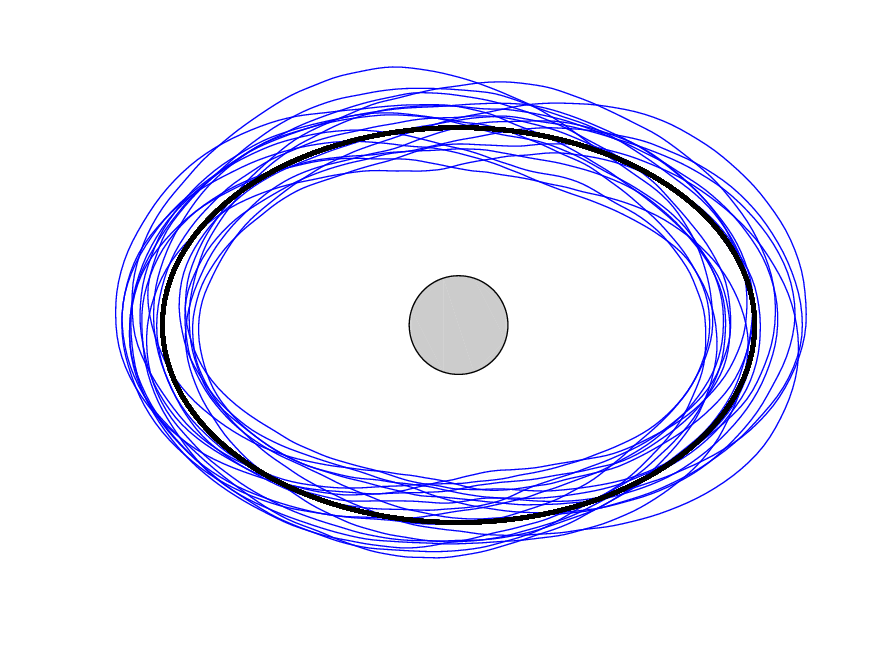}
\caption{\label{Fig:randDom}Different realizations of the random domain.}
\end{center}
\end{figure}
As our third example, we consider the Laplacian on a random domain:
\[
\Delta u(\omega) = 0\ \text{in }D(\omega)\subset\mathbb{R}^2,\quad
 u(\omega) = \frac 1 4 (x_1^2+x_2^2)\ \text{on }\partial D(\omega).
\]
The nominal domain \(D_\refd\isdef\E[D]\) is given by an ellipse with 
semi-axis 0.3 and 0.2. The boundary perturbation is defined by means of 
a random vector field with \({\bf V}(\partial D_\refd,\omega)=\partial D(\omega)\).
This vector field is represented via a Karhunen-Lo\`eve expansion according to
\[
{\bf V}({\bf x},\omega)={\bf x}+\bigg(\frac{1}{40}\sum_{k=1}^{50} k^{-2}\big(\cos(k\phi_{\bf x})X_{2k}(\omega)+\sin(k\phi_{\bf x})X_{2k-1}(\omega)\big)\bigg)\begin{bmatrix}
\cos(\phi_{\bf x})\\ \sin(\phi_{\bf x})\end{bmatrix},
\]
where \(\phi_{\bf x}\in[0,2\pi)\) is the angle associated to \({\bf x}\in\partial D_\refd\) and
\(X_n\sim\mathcal{U}(-\sqrt{3},\sqrt{3})\). 
A visualization of several realizations of the random domain can be found in Figure~\ref{Fig:randDom}.
By extending the boundary perturbation into space by a smooth blending function, it can be shown
that the solution \(u\) to the diffusion problem on the random domain exhibits a regularity similar
to Lemma~\ref{lem:analgal} with \(\gamma_n\isdef\sqrt{\lambda_n}\|\bphi_n\|_{W^{1,\infty}(D_\refd;\mathbb{R}^2)}\),
where the \(\bphi_n\) correspond to the spatial functions of \({\bf V}\)'s Karhunen-Lo\`eve expansion, see \cite{HPS16} for the details. 

As quantity of interest, we consider, likewise to the previous example, 
the first four moments of the solution, but this time restricted to the disc 
\(\{{\bf x}\in\mathbb{R}^2:\|{\bf x}\|_2\leq0.05\}\). This disc is illustrated as
grey shaded area in Figure~\ref{Fig:randDom}. The numerical solution of 
the underlying boundary value problem is performed by an exponentially 
convergent boundary integral method based on collocation as proposed in
\cite{Kre92}. The boundary is discretized by 200 points and a 
reference is again obtained by \(2^{30}\) quasi-Monte Carlo samples 
based on the Halton sequence. 

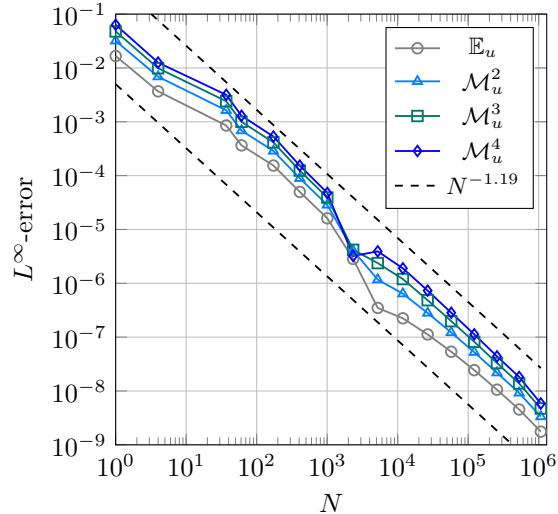
\begin{figure}[htb]
\begin{center}
\pgfplotsset{width=0.49\textwidth, height=0.49\textwidth}
\begin{tikzpicture}
\begin{loglogaxis}[grid, ymin= 1e-9, ymax = 1e-1, xmin = 1, xmax =1.3e6, ytick={0.1,0.01,0.001,0.0001,0.00001,0.000001,1e-7,1e-8,1e-9,1e-10,1e-11,1e-12,1e-13},%
    legend style={legend pos=north east,font=\small}, ylabel={\(L^\infty\)-error}, xlabel ={$N$}]
\addplot[line width=0.7pt,color=gray,mark=o] table[x index=0,y index=1]{./ResultsRev/ausgSG.txt};\addlegendentry{$\mathbb{E}_u$};
\addplot[line width=0.7pt,color=azure,mark=triangle] table[x index=0,y index=2]{./ResultsRev/ausgSG.txt};\addlegendentry{$\mathcal{M}_u^2$};
\addplot [line width=0.7pt, color=dunkelmint,mark=square] table[x index=0,y index=3]{./ResultsRev/ausgSG.txt};\addlegendentry{$\mathcal{M}_u^3$};
\addplot [line width=0.7pt, color=blue,mark=diamond] table[x index=0,y index=4]{./ResultsRev/ausgSG.txt};\addlegendentry{$\mathcal{M}_u^4$};
\addplot [line width=0.7pt, color=black,dashed] table[x index={0}, y expr={0.4*x^(-1.19)}]{./ResultsRev/ausgSG.txt};
\addplot [line width=0.7pt, color=black,dashed] table[x index={0}, y expr={0.005*x^(-1.19)}]{./ResultsRev/ausgSG.txt};\addlegendentry{$N^{-1.19}$};
\end{loglogaxis}
\end{tikzpicture}
\caption{\label{fig:DomainConvergence}Convergence of the anisotropic 
sparse grid quadrature for the random domain.}
\end{center}
\end{figure}

In Figure~\ref{fig:DomainConvergence}, the relative errors of 
the moments computed by the sparse grid quadrature are depicted.
Also in this application, we observe a rate of convergence which 
is greater than 1 for all moments under consideration and, therefore,
again better than expected.

\section{Conclusion}\label{sec:conclusion}

In the present article, we have considered the anisotropic sparse grid quadrature 
applied to a class of analytic functions. Under the assumption that the regions of 
analyticity for the particular dimensions are increasing algebraically, i.e.~\(\tau_n\geq c n^{r}\) 
for some  \(c,r\in\mathbb{R}_+\), we have derived dimension independent convergence 
of the anisotropic sparse grid quadrature. 
In order to estimate the cost of the quadrature, a novel estimate for the 
cardinality of the anisotropic sparse grid index set has been proven. 
Our numerical comparisons suggest that this novel estimate is much more accurate 
than the widely used estimate by Beged-Dov. 
In particular, it has been shown that the estimate of Beged-Dov can easily be
deduced from our novel estimate. 
Besides pure quadrature problems, the anisotropic sparse grid quadrature has been 
successfully applied to Bochner integrals which stem from the computation of the 
moments of elliptic diffusion problems with random coefficients or on random 
domains, respectively. 

\bibliographystyle{plain}

\end{document}